\theoremstyle{definition}
\newtheorem{theorem}{Theorem}[section]
\newtheorem{lemma}[theorem]{Lemma}
\newtheorem{proposition}[theorem]{Proposition}
\newtheorem{observation}[theorem]{Observation}
\theoremstyle{definition}
\newtheorem{definition}[theorem]{Definition}
\newtheorem{pclaim}[theorem]{Claim}
\newtheorem*{ac}{Acknowledgements}
\theoremstyle{remark}
\newenvironment{rmenum}{
\begin{enumerate}

}
{\end{enumerate}}
\newcommand{\parcut}[2]{\delta_{#1}(#2)}
\newcommand{\reverse}[1]{#1^{-1}}
\newcommand{\nonneg}{\ge 0}
\newcommand{\nonnegz}{Z_{\nonneg}}
\newcommand{\acset}[2]{\mathcal{S}^{#2}(#1)}
\newcommand{\nacset}[2]{\mathcal{T}^{#2}(#1)}
\newcommand{\sqrset}[1]{\mathcal{A}(#1)}
\newcommand{\sdigraph}{\mathcal{G}}
\newcommand{\stcomp}[1]{\mathcal{C}(#1)}
\newcommand{\st}{\preceq}
\newcommand{\stset}[1]{\mathcal{O}(#1)}
\newcommand{\parstset}[2]{\mathcal{O}^{#2}(#1)}
\newcommand{\se}[2]{\sigma(#1; #2)}
\newcommand{\tse}[2]{\tilde{\sigma}(#1; #2)}
\title[Bidirected Critical Graphs I]{Constructive Characterization for Bidirected Analogue of Critical Graphs I: Principal Classes of Radials and Semiradials}
\author{Nanao Kita}
\address{Tokyo University of Science 
2641 Yamazaki, Noda, Chiba, Japan 278-0022}
\email{kita@rs.tus.ac.jp}
\begin{document}

\begin{abstract}
This paper is the first from  serial papers that provide  constructive characterizations for classes of bidirected graphs known as radials and semiradials. 
In this paper, we provide  constructive characterizations for five principle classes of radials and semiradials to be used 
 for characterizing general radials and semiradials. 
A bidirected graph is a graph in which each end of each edge has a sign $+$ or $-$. 
Bidirected graphs are a common generalization of digraphs and signed graphs. 
We define a new concept of radials as a generalization of a classical concept in matching theory,  critical graphs. 
Radials are also a generalization of a class of digraphs known as flowgraphs. 
We also define semiradials, which are a relaxed concept of radials. 
We further define special classes of radials and semiradials, 
that is,   absolute semiradials,  strong  and almost strong radials, 
  linear semiradials, and sublinear radials. 
 We  provide constructive characterizations for these five classes of bidirected graphs. 
 Our serial papers are a part of a series of works that establish the strong component decomposition for bidirected graphs. 
\end{abstract} 

\maketitle

\section{Introduction} 

Bidirected graphs are a common generalization of digraphs and signed graphs.  
Bidirected graphs were first proposed in 1970 by Edmonds and Johnson~\cite{Edmonds70matching:a} 
to provide a unified integer linear programming formulation for various combinatorial optimization problems. 
A bidirected graph is a graph in which each end of each edge has a sign $+$ or $-$. 
A digraph is a bidirected graph in which two ends of each edge have distinct signs.  
A signed graph is a graph in which each edge has a single sign. 
Therefore, this is a special bidirected graph in which two ends of each edge have the same sign.

In this paper, we define a new class of bidirected graphs, {\em radials}, 
that is a common generalization of critical graphs and flowgraphs. 
Critical graphs are a classical concept in matching theory~\cite{lp1986}. 
In the context of $1$-matchings or $1$-factors, critical graphs are also called factor-critical graphs.    
There is an easy correspondence between   a signed graph and a graph endowed with a set of edges. 
Under this correspondence,  critical graphs are equivalent to signed graphs 
in which every vertex can reach a specified vertex along  directed trails starting and ending with signs $-$ and $+$, respectively.  
In contrast, a directed graph  is  called a flowgraph 
if every vertex can reach a specified vertex along  directed paths or, equivalently, along  directed trails. 
We define the concept of radials  
as the bidirected graphs in which every vertex can reach a specified vertex 
along a directed trail that starts and ends with signs $-$ and $+$, respectively.  
We also define a relaxed concept of radials, {\em semiradials}, 
as bidirected graphs in which every vertex can reach a specified vertex along  directed trails starting with $-$. 

Constructive characterizations for classes of graphs can be  strong tools,  because they are useful in inductive proofs. 
Ear decompositions are a general term that refers to a type of inductive construction methods of graphs. 
In ear decompositions, a graph is constructed from a single vertex or circuit by repeatedly adding paths or circuits. 
Lov\'asz gave a constructive characterization for factor-critical graphs in terms of ear decompositions~\cite{lovasz1972a, lp1986}. 
Strongly connected digraphs can also be characterized using a directed version of ear decompositions~\cite{schrijver2003}.

Our aim  is to provide a constructive characterization for the class of radials. 
For attaining this aim,  a  constructive characterization of semiradials is also required. 
Thus, we aim to characterize radials and semiradials. 
In this paper, 
we define five principal classes of radials and semiradials, that is, 
{\em absolute} semiradials, {\em strong}  and {\em almost strong} radials, {\em linear} semiradials, and {\em sublinear} radials, 
 and provide constructive characterizations for these five classes. 
In the sequel of this paper~\cite{kitasearii}, 
we provide  constructive characterizations for general radials and semiradials 
using the five principal classes that we give in this paper. 
Our two papers, this paper and its sequel, 
are a part of a series of works that establish the strong component decomposition for bidirected graphs. 
See also Kita~\cite{kita2017bidirected}.

We outline the five principal classes of radials and semiradials as follows. 
We first define absolute semiradials as semiradials in which every vertex can reach a specified vertex along directed trails starting with $-$ and $+$, 
and then provide a characterization of this class in a form that is similar to ear decompositions. 
We then define two subclasses of absolute semiradials, that is, strong  and almost strong radials.  
We give constructive characterizations for  these two classes using the characterization of absolute semiradials.  
On the other hand, 
we define linear semiradials as semiradials in which no vertex can reach the specified vertex along ditrails starting with $+$.  
We also define sublinear radials as a similar counterpart concept to linear semiradials. 
It is revealed that these two classes have structures that is similar to flowgraphs, 
and we provide a characterization for these classes using the strong component decomposition of digraphs 
and the constructive characterization of strongly connected digraphs.

The remaining part of this paper is constructed as follows. 
Sections~\ref{sec:notation} to \ref{sec:flowgraph} are devoted to  preliminaries. 
The basic notation is explained in Section~\ref{sec:notation}. 
The strong component decomposition and strongly connected digraphs are explained in Section~\ref{sec:digraph}. 
In Section~\ref{sec:digraphic}, the relationship between digraphs and bidirected graphs is explained. 
In Sections~\ref{sec:flowgraph} and \ref{sec:critical},  flowgraphs and critical graphs are explained. 
  New concepts and results are introduced from Section~\ref{sec:radial} onward. 
In Section~\ref{sec:radial}, we define new concepts of radials and semiradials. 
In Sections~\ref{sec:absolute}, \ref{sec:strong}, and \ref{sec:astr}, 
we provide the definitions and constructive characterizations 
for absolute semiradials, strong radials, and almost strong radials, respectively. 
In Section~\ref{sec:linear}, 
we provide the definitions and constructive characterizations 
for linear semiradials and sublinear radials.

\section{Notation} \label{sec:notation} 

\subsection{Graphs} \label{sec:notation:graph}

We mostly follow Schrijver~\cite{schrijver2003} for basic notation and definitions. 
In this section, we list exceptions or nonstandard definitions that we use. 
We denote the set of nonnegative integers by $\nonnegz$. 
Let $G$ be an (undirected) graph. 
We denote the vertex and edge sets of $G$ by $V(G)$ and $E(G)$, respectively.  
We consider mutigraphs. That is, loops and parallel edges may exist. 
For $u,v\in V(G)$, $uv$ denotes an edge whose ends are $u$ and $v$. 
As usual, a singleton $\{x\}$ is often denoted by $x$.

Let $X\subseteq V(G)$. 
We denote the cut of $X$, that is, the set of edges that join $X$ and $V(G)\setminus X$, by $\parcut{G}{X}$. 
The subgraph of $G$ induced by $X$ is denoted by $G[X]$. 
We often denote $G[V(G)\setminus X]$ by $G -X$. 
We sometimes treat a graph as the set of its vertices.  

Let $H$ be a supergraph of $G$, and let $F\subseteq E(H)$. 
The graphs obtained by adding $F$ to $G$ and deleting $F$ from $G$ are denoted by $G + F$ and $G-F$, respectively. 
For two subgraphs $G_1$ and $G_2$ of $H$, 
the addition of $G_1$ and $G_2$ is denoted by $G_1 + G_2$. 

Assume that $G$ is connected, and $v\in V(G)$ is a cut vertex of $G$, that is, 
$G-v$ has more than one connected components. 
For each connected component of $C$ of $G-v$, 
we call $G[V(C)\cup \{v\}]$ the {\em block} of $G$  over $v$.

Now, let $G$ be a graph. 
Let $s,t\in V(G)$. 
A {\em walk} from $s$ to $t$ is a sequence $(w_1, \ldots, w_k)$, where $k\ge 1$, such that 
 $k$ is odd, $w_1 = s$,  $w_k = t$, and $w_i \in V(G)$ holds for each odd $i\in \{1,\ldots, k\}$, 
whereas $w_i$ is an edge in $E(G)$ that joins $w_{i-1}$ and $w_{i+1}$ for each even $i\in \{1,\ldots, k\}$.  
We call $s$ and $t$ the {\em ends} of this walk. 
If $s$ and $t$ are the same vertex $r$, then we say that a walk is {\em closed} over $r$. 
A {\em trail} is a walk in which no edge is contained more than once. 
A {\em path} is a trail in which no vertex is contained more than once.

Let $W$ be a walk $(w_1, \ldots, w_k)$, where $k\ge 1$. 
We denote by $\reverse{W}$  the walk $(w_k, \ldots, w_1)$. 
Let $W'$ be another walk $(w_k, \ldots, w_l)$, where $k \le l$. 
Then, $W + W'$ denotes the concatenation of $W$ and $W'$, that is, the walk $(w_1, \ldots, w_l)$.  
Note that this operation is not commutative. 

A vertex or edge $v$ of $G$ is said to be a vertex or edge of $W$ if $v$ is contained in $W$. 
We denote the sets of vertices and edges of $W$ by $V(W)$ and $E(W)$.  
We sometimes treat a walk $W$ as the graph whose vertex and edge sets are $V(W)$ and $E(W)$. 
For a subgraph $H$ of $G$, 
the addition $H + W$ of $H$ and $W$ denotes the addition of $H$ and the graph $W$.

The {\em terms} of $W$ are  $k$ variables $t_1, \ldots, t_k$ that are ordered from $1$ to $k$.  
We say that a term $t$ of $W$ {\em denotes} $w_i$ if $t$ is the $i$-th term of $W$. 
For each odd or even $i \in \{ 1, \ldots, k \}$, we call $t_i$ a {\em vertex} or {\em edge term}, respectively. 
For vertex terms $t_i$ and $t_j$ with $i \le j$, 
we denote the subwalk $(w_i, \ldots, w_j)$ of $W$ by $t_iWt_j$.  
For simplicity, we often denote $t_i$ by $w_i$ if the meaning is obvious from the context. 
If $i=1$ or $j=k$, we often denote $t_iWt_j$ by $w_1Wt_j$ or $t_iWw_k$, respectively. 
Furthermore, 
we often denote $t_iWt_j$ by $w_iWw_j$.

\subsection{Bidirected Graphs}

A bidirected graph is a graph in which each end of each edge has a sign $+$ or $-$. 
A precise definition is as follows. 
Let $G$ be a graph. 
Let $\partial_+$ and $\partial_-$ be mappings $E(G) \rightarrow  2^{V(G)}$ that satisfy the following conditions 
for each $e \in E(G)$ with (possibly identical) ends $u$ and $v$. 
\begin{rmenum} 
\item For each $\alpha\in\{+, -\}$, $\partial_{\alpha}(e) \subseteq \{u, v\}$ holds.  
\item $\partial_+(e) \cup \partial_-(e) = \{ u, v \}$.  
\item  If $e$ is not a loop, then $\partial_+(e) \cap \partial_-(e) = \emptyset$. 
\end{rmenum} 
Then, the graph $G$ endowed with $\partial_+$ and $\partial_-$ is called a {\em bidirected graph}.  
We say that a sign of $u$ over $e$ is $\alpha$ if $ u \in \partial_\alpha(e)$ holds for $\alpha\in\{+, -\}$. 
If $e$ is not a $(+, -)$-loop and $u\in \partial_\alpha(e)$ holds,  then we denote 
the sign of $u$ over $e$ by $\se{u}{e}$. 
If $\partial_{\alpha}(e) = \{u ,v\}$ for some $\alpha\in\{+, -\}$, then $e$ is called an $(\alpha, \alpha)$-edge. 
In contrast, if $\partial_\alpha(e) \cap \{u, v\} \neq\emptyset$ holds for each $\alpha \in \{+, -\}$,  then $e$ is called a $(+, -)$- 
or $(-, +)$-edge. 

Bidirected graphs are a common generalization of digraphs and signed graphs.
A digraph is a special bidirected graph, in that, 
a digraph is a bidirected graph in which every edge is a $(+, -)$-edge. 
A signed graph is a bidirected graph in which every edge is a $(+, +)$- or $(-, -)$-edge. 
The notation for (undirected) graphs that are introduced in Section~\ref{sec:notation:graph} 
can be naturally defined for bidirected graphs and directed or signed graphs.

We define directed  walks for bidirected graphs. 
Let $G$ be a bidirected graph, and let $W$ be a walk in $G$ of the form $(w_1, \ldots, w_k)$, where $k\ge 1$. 
Let $t_1,\ldots, t_k$ be the terms of $W$.  
We call $W$ a {\em directed walk} or {\em diwalk} if 
there exists a mapping $\tilde{\sigma}$ that satisfies the following conditions:  
\begin{rmenum} 
\item 
For each even $i\in\{1,\ldots, k\}$,  
$\tse{t_{i-1}}{t_i} = \se{w_{i-1}}{w_i}$ and $\tse{t_{i+1}}{t_i} = \se{w_{i+1}}{w_i}$  if $w_i$ is not a $(+, -)$-loop; 
\item 
if $w_i$ is a $(+, -)$-loop, then 
$\tse{t_{i-1}}{t_i}$ and $\tse{t_{i+1}}{t_i}$ are mutually distinct signs $+$ or $-$. 
\item 
For each odd $i\in\{1,\ldots, k\}\setminus \{1, k\}$, 
$\tse{t_i}{t_{i-1}}$ and $\tse{t_i}{t_{i+1}}$ are mutually distinct signs. 
\end{rmenum} 
It is easily observed that if $W$ is a diwalk, then the mapping that satisfies this condition  uniquely exists. 
We denote  $\tilde\sigma$ by $\sigma$  under the assumption that $W$ is a diwalk. 

Assume that $W$ is a diwalk in the following. 
We denote $\se{t_1}{t_2}$ and $\se{t_k}{t_k-1}$ by $\se{t_1}{W}$ and $\se{t_k}{W}$, respectively, if $k\ge 3$. 
If $\se{t_1}{W} = \alpha$ and $\se{t_k}{W} = \beta$ for $\alpha, \beta \in \{+, -\}$, then $W$ is said to be an $(\alpha, \beta)$-ditrail. 
We define the trivial ditrail with $k=1$ to be a $(+, -)$- and $(-, +)$-ditrail. 
For any $\beta\in\{+, -\}$, an $(\alpha, \beta)$-ditrail is called an $\alpha$-ditrail. 
We often denote $\se{t_1}{W}$ and $\se{t_k}{W}$  
by $\se{w_1}{W}$ and $\se{w_k}{W}$,  
and call these values the signs of $w_1$ and $w_2$ over $W$ 
 if the meaning is obvious from the context.

A {\em directed trail} or {\em ditrail} is a diwalk in which no edge is contained more than once. 
A {\em directed path} or {\em dipath} is a ditrail in which no vertex is contained more than once.

We now define directed ear or diear in bidirected graphs. 
Let $X\subseteq V(G)$. 
A  diwalk $W$ is a {\em diear} relative to $X$  if the ends of $W$ are contained in $X$, 
its edges are disjoint from the edges of $G[X]$, and 
\begin{rmenum} 
\item \label{item:ear:simple} $W$ is a ditrail, or 
\item \label{item:ear:baloon} $W$ is of the form $(v, e, w_3, \ldots, w_{k-2}, e, v)$, where $k\ge 7$, 
such that $e\in \parcut{G}{X}$ holds and  $(w_3, \ldots, w_{k-2})$ is a closed ditrail that does not contain $e$. 
\end{rmenum} 
A ditrail of the form \ref{item:ear:simple} is called a {\em simple} diear. 
A ditrail of the form \ref{item:ear:baloon} is called a {\em scoop} diear,  
for which $e$  is called the {\em grip}. 
A scoop diear $W$ is called an $\alpha$-scoop diear if the sign of $v$ over $e$ is $\alpha$.

\section{Digraphs} \label{sec:digraph}

Let $G$ be a digraph. 
Two vertices $u$ and $v$ of $G$ are {\em strongly connected} 
if $G$ has dipaths from $u$ to $v$ and from $v$ to $u$. 
A digraph is {\em strongly connected} if every two vertices are strongly connected. 
A {\em strongly connected component} or {\em strong component} 
is a maximal strongly connected subgraph. 
We denote the set of strong components of $G$ by $\stcomp{G}$. 
It is easily observed from these definitions that the following properties hold for strong components: 
\begin{rmenum} 
\item 
If $C, D\in\stcomp{G}$ are distinct, then $C$ and $D$ are disjoint.  
\item 
$\bigcup \{ V(C): C\in\stcomp{G} \} = V(G)$. 
\end{rmenum}

\begin{definition} 
Let $G$ be a digraph. 
Define a binary relation $\st$ over $\stcomp{G}$ as follows: 
\begin{rmenum} 
\item 
For $D_1, D_2\in\stcomp{G}$, 
let $D_1 \st D_2$ if $G$ has an arc from a vertex in $D_1$ to a vertex in $D_2$. 
\item 
For $D_1, D_2\in\stcomp{G}$, 
let $D_1 \st D_2$ 
if there exists $C_1,\ldots, C_k\in\stcomp{G}$, where $k\ge 1$, such that  
$C_1 = D_1$, $C_k = D_k$, and $C_i\st C_{i+1}$ for every $i\in \{1,\ldots, k\}\setminus \{k\}$. 
\end{rmenum} 
\end{definition} 

The next proposition can be confirmed rather easily. 

\begin{proposition} \label{prop:stdecomp} 
For a digraph $G$, the binary relation $\st$ is a partial order over $\stcomp{G}$. 
\end{proposition}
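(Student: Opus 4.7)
The plan is to verify each of the three axioms of a partial order---reflexivity, transitivity, and antisymmetry---interpreting the definition so that clause (ii) supplies the reflexive-transitive closure of the direct arc relation in clause (i).

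For reflexivity, I would take the trivial sequence of length $k=1$ in clause (ii), which directly witnesses $D \preceq D$ for every $D\in\stcomp{G}$. Transitivity is equally immediate: given witnessing sequences $C_1,\ldots,C_k$ for $D_1\preceq D_2$ and $C'_1,\ldots,C'_m$ for $D_2\preceq D_3$, the concatenation $C_1,\ldots,C_k=C'_1,\ldots,C'_m$ (identifying the shared endpoint $D_2$) witnesses $D_1\preceq D_3$.

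The substantive step is antisymmetry, which I expect to be the main obstacle. Suppose $D_1\preceq D_2$ and $D_2\preceq D_1$; the goal is $D_1=D_2$. Splice the two witnessing sequences into a single cycle $C_1,\ldots,C_\ell$ that starts and ends at $D_1$, passes through $D_2$, and satisfies $C_i\preceq C_{i+1}$ in the sense of clause (i) for every consecutive pair. Pick any $u\in V(D_1)$ and any $v\in V(D_2)$. For each consecutive pair $(C_i,C_{i+1})$ there is an arc $a_i$ from some $x_i\in V(C_i)$ to some $y_{i+1}\in V(C_{i+1})$; inside each $C_i$, the strong connectedness of the component gives a dipath from $y_i$ (or from $u$, at the first component) to $x_i$ (or to $v$, at the component containing $D_2$). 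Concatenating these internal dipaths with the arcs $a_i$ produces a diwalk from $u$ to $v$, and traversing the remainder of the cycle in the same way gives a diwalk from $v$ to $u$. Since the existence of diwalks implies the existence of dipaths, $u$ and $v$ are strongly connected in $G$.

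Thus every vertex of $D_1$ is strongly connected to every vertex of $D_2$, so $V(D_1)\cup V(D_2)$ is contained in a single strongly connected subgraph. By the maximality in the definition of a strong component and the disjointness of distinct strong components noted before the statement, this forces $D_1=D_2$, establishing antisymmetry. The only delicate point is ensuring that the stitched walks across components really do form valid diwalks; this is straightforward because the arcs $a_i$ are ordinary arcs of a digraph and the internal pieces are dipaths within a single strong component, so concatenation preserves the directed walk structure without any sign-interaction subtlety.
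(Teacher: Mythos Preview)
Your proof is correct. The paper itself does not supply a proof of this proposition; it merely remarks that ``the next proposition can be confirmed rather easily'' and moves on. Your argument is the standard verification: reflexivity and transitivity are immediate from clause~(ii) read as the reflexive--transitive closure of clause~(i), and antisymmetry follows because a cycle among strong components would let you stitch together a closed diwalk visiting vertices in two distinct components, contradicting maximality. The one interpretive choice you flagged---that the $C_i \preceq C_{i+1}$ inside clause~(ii) should be read as the direct-arc relation of clause~(i)---is the natural reading, and in any case unfolding a nested use of~(ii) yields a longer chain of clause-(i) steps, so the two readings coincide. Nothing is missing.
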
 

We call the partially ordered set $(\stcomp{G}, \st)$ the {\em strong component decomposition} of $G$ 
and denote this by $\stset{G}$. 
The next statement is easily confirmed. 

\begin{proposition} \label{prop:stcomp2dipath} 
Let $G$ be a digraph. 
For any $u,v\in V(G)$, 
there is a dipath from $u$ to $v$ 
if and only if $D_1 \st D_2$, where $D_1, D_2\in\stcomp{G}$ are the strong components 
with $u\in D_1$ and $v\in D_2$. 
\end{proposition}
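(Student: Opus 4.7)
The plan is to prove the two directions of the biconditional separately, treating the ``only if'' direction via a walk in the strong component decomposition and the ``if'' direction via a concatenation argument.

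For the forward direction, I would take a dipath $P = (u = x_0, a_1, x_1, \ldots, a_m, x_m = v)$ in $G$ and track which strong component each vertex lies in. Let $C_i$ denote the strong component containing $x_i$, so $C_0 = D_1$ and $C_m = D_2$. For consecutive vertices $x_{i-1}, x_i$, either $C_{i-1} = C_i$ or the arc $a_i$ witnesses $C_{i-1} \st C_i$ directly by clause~(i) of the definition. In either case, padding with length-one chains if necessary, the sequence $C_0, \ldots, C_m$ realizes $D_1 \st D_2$ via clause~(ii). (I would also note that $\st$ restricted to a single strong component is reflexive, since the definition with $k=1$ gives $C \st C$.)

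For the reverse direction, I would unfold the relation $D_1 \st D_2$ using clause~(ii) to obtain a chain of strong components $C_1 = D_1, C_2, \ldots, C_k = D_2$ with each step given by clause~(i). So for each $i < k$ there is an arc $a_i$ from some $y_i \in C_i$ to some $z_{i+1} \in C_{i+1}$. Within each $C_i$ (with $1 < i < k$), the vertices $z_i$ and $y_i$ are strongly connected in $G$, so there is a dipath $Q_i$ from $z_i$ to $y_i$; within $C_1$ there is a dipath $Q_1$ from $u$ to $y_1$, and within $C_k$ a dipath $Q_k$ from $z_k$ to $v$. Concatenating $Q_1, a_1, Q_2, a_2, \ldots, a_{k-1}, Q_k$ yields a diwalk from $u$ to $v$, from which one extracts a dipath by the standard shortcut argument (deleting any closed subwalk between repeated vertices).

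The main potential pitfall is a subtle one inside the reverse direction: when I select the intra-component dipaths $Q_i$, I need them to exist and to consist of vertices in $V(G)$, which is guaranteed by the definition of strong component (strong connectedness is witnessed in $G$, not merely in an induced subgraph). The remaining steps are bookkeeping: verifying reflexivity of $\st$ for the forward direction, and correctly reducing a diwalk to a dipath for the reverse direction. Neither is substantial, so I would keep the write-up short.
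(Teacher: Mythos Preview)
Your argument is correct and is exactly the standard verification one would expect. Note, however, that the paper does not actually supply a proof of this proposition: it is introduced with the phrase ``The next statement is easily confirmed'' and left without argument, so there is nothing to compare your approach against. One small edge case worth making explicit in your write-up is $k=1$ in the reverse direction (i.e., $D_1 = D_2$), where no inter-component arcs exist and the dipath from $u$ to $v$ comes directly from strong connectivity of $D_1$; your concatenation template as stated presupposes $y_1$ and $z_k$ exist.
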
 

Proposition~\ref{prop:stcomp2dipath} means that 
the strong component decomposition characterizes how  dipaths exist in a digraph. 
The ``converse'' of the strong component decomposition also holds.  
That is,  
given a set $\mathcal{D}$ of strongly connected digraphs and a partial order $\preccurlyeq$ over $\mathcal{D}$, 
we can construct a new  digraph $G$ so that  
the strong component decomposition of $G$ is $(\mathcal{D}, \preccurlyeq)$.

\begin{proposition} \label{prop:order2stcomp} 
Let $\mathcal{D}$ be a set of strongly connected digraphs that are pairwise disjoint, 
and let $\preccurlyeq$ be a partial order over $\mathcal{D}$. 
Let $G$ be a digraph obtained by the following procedure: 
\begin{rmenum} 
\item If $D_1, D_2\in\mathcal{D}$ are nonrefinable, that is, 
$D_1\preccurlyeq C \preccurlyeq D_2$ implies $D_1 = C$ or $D_2 = C$ for every $C\in\mathcal{D}$, 
then add an arc whose tail and head are in $D_1$ and $D_2$, respectively. 
\item 
For arbitrary two digraphs $D_1, D_2\in\mathcal{D}$ with $D_1\preccurlyeq D_2$, 
arbitrarily add arcs whose tail and head are in $D_1$ and $D_2$, respectively;  this step can be skipped. 
\end{rmenum} 
Then, $\stset{G}$ is identical to $(\mathcal{D}, \preccurlyeq)$. 
\end{proposition}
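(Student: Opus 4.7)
The plan is to verify the two conditions that pin down the strong component decomposition: first, that the strong components of $G$ are precisely the members of $\mathcal{D}$, and second, that the induced order $\st$ on $\stcomp{G}$ coincides with $\preccurlyeq$ on $\mathcal{D}$.

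For the first claim, since the members of $\mathcal{D}$ are pairwise disjoint and their union is $V(G)$, there is a well-defined map $f\colon V(G)\to \mathcal{D}$ sending each vertex to the unique $D\in\mathcal{D}$ containing it. The key observation is the monotonicity of $f$ along arcs: for every arc $(x,y)$ of $G$, one has $f(x)\preccurlyeq f(y)$. Indeed, either the arc was already present inside some $D\in\mathcal{D}$, giving $f(x)=f(y)$, or the arc was introduced by step (i) or step (ii), in which case the construction directly supplies $f(x)\preccurlyeq f(y)$. By transitivity, $f$ is monotone along every dipath of $G$, and antisymmetry of $\preccurlyeq$ then forces vertices of distinct members of $\mathcal{D}$ to be non-strongly-connected. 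Combined with the hypothesis that each $D\in\mathcal{D}$ is strongly connected (so all its vertices lie in a common strong component), this identifies $\stcomp{G}$ with $\mathcal{D}$.

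For the second claim, the direction $\st\,\Rightarrow\,\preccurlyeq$ is immediate from arc-monotonicity together with Proposition~\ref{prop:stcomp2dipath}: if $D_1\st D_2$, then $G$ has a dipath from some vertex in $D_1$ to some vertex in $D_2$, and $f$-monotonicity along that dipath yields $D_1\preccurlyeq D_2$. The converse direction is where step (i) is actually invoked. Given $D_1\preccurlyeq D_2$, I would produce a chain $D_1=C_0\prec C_1\prec\cdots\prec C_k=D_2$ in $\mathcal{D}$ in which each $C_{i-1}\prec C_i$ is nonrefinable; step (i) then supplies an arc from some vertex of $C_{i-1}$ into some vertex of $C_i$ for every $i$, and interpolating via dipaths inside each $C_i$ (which exist by strong connectivity) splices these arcs into a single dipath from $D_1$ to $D_2$. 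Another appeal to Proposition~\ref{prop:stcomp2dipath} converts this into $D_1\st D_2$.

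The main obstacle is justifying the existence of such a chain of nonrefinable covers, which is automatic when $\mathcal{D}$ is finite (as in the intended applications) but would need a stronger version of step (i) for posets with no covering pairs. Beyond this, the argument is essentially bookkeeping once the arc-monotonicity of $f$ is in place; both disjointness of distinct strong components and the fact that $\bigcup\{V(C):C\in\stcomp{G}\}=V(G)$ are then recorded automatically, matching the properties listed after the definition of $\stcomp{G}$.
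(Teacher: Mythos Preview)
The paper states this proposition without proof; it is presented as a routine converse to the strong component decomposition, so there is nothing in the paper to compare against at the level of argument. Your proof is the standard one and is correct: the monotonicity of the map $f$ along arcs gives both that distinct members of $\mathcal{D}$ cannot merge into a single strong component and that $\st$ refines $\preccurlyeq$, while step~(i) together with strong connectivity of each $D\in\mathcal{D}$ gives the reverse inclusion $\preccurlyeq\,\subseteq\,\st$.

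Your caveat about the existence of a maximal chain of covers is well taken and is, in fact, a genuine gap in the \emph{statement} rather than in your proof: as written, the proposition says nothing about $\mathcal{D}$ being finite, and for a countable dense linear order step~(i) would add no arcs at all, so $\st$ would be trivial while $\preccurlyeq$ is not. In the paper's setting all graphs are implicitly finite, so this is not an issue for the intended application, but you are right to flag it.
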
 


Additionally,  a constructive characterization for strongly connected digraphs is known.

\begin{theorem} \label{thm:ear}  
A digraph  $G$ is strongly connected  if and only if $G$ is a member of $\sdigraph$ that is defined as follows: 
\begin{rmenum} 
\item 
A digraph with only one vertex is a member of $\sdigraph$. 
\item 
Let $D \in \sdigraph$,  and $P$ be a (possibly closed) ditrail 
whose ends are vertices of $D$ and arcs are disjoint from the arcs of $D$. 
Then, $D+P$ is a member of $\sdigraph$. 
\end{rmenum}
\end{theorem}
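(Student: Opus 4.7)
The plan is to prove both directions, with the ``if'' direction by a direct induction on the construction and the ``only if'' direction by iteratively building $G$ up from a single vertex.

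For the ``if'' direction, I would induct on the number of construction steps. The base case, a single vertex, is vacuously strongly connected. For the inductive step, suppose $D\in\sdigraph$ is strongly connected and $P$ is a (possibly closed) ditrail whose ends $u,v$ lie in $V(D)$ and whose arcs avoid $E(D)$; I claim $D+P$ is strongly connected. Any internal vertex $w$ of $P$ can reach $v$ along a terminal subditrail of $P$, and then any vertex of $D$ via the strong connectivity of $D$; conversely, any vertex of $D$ reaches $u$ in $D$, then $w$ via an initial subditrail of $P$. Together with strong connectivity inside $D$, this gives strong connectivity of $D+P$. The closed case ($u=v$) is the same argument with only one endpoint.

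For the ``only if'' direction, assume $G$ is strongly connected, pick any vertex $v_0$, and let $D_0 := (\{v_0\}, \emptyset)\in\sdigraph$. I would show that whenever we have $D\in\sdigraph$ with $D\subseteq G$ and $D\neq G$, we can find a (possibly closed) ditrail $P$ in $G$, with both ends in $V(D)$ and arcs disjoint from $E(D)$, such that $D+P\subseteq G$; then $D+P\in\sdigraph$, and iterating terminates since $G$ is finite and each step strictly enlarges $E(D)$.

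The core of the construction splits into two cases. If $V(D)=V(G)$ but $E(D)\neq E(G)$, pick any arc $a\in E(G)\setminus E(D)$; it is itself a length-one ditrail with both endpoints in $V(D)$, so we add it. If $V(D)\subsetneq V(G)$, strong connectivity of $G$ forces the existence of an arc $a=(u,w)$ with $u\in V(D)$ and $w\notin V(D)$ (take any dipath from $V(D)$ to $V(G)\setminus V(D)$ and look at its first crossing arc). Again by strong connectivity, there is a dipath $Q$ in $G$ from $w$ back to $V(D)$; truncate $Q$ at its first vertex $v'$ that lies in $V(D)$, so that the internal vertices of the resulting dipath $Q'$ from $w$ to $v'$ lie outside $V(D)$. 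Then $P := (u,a,w)+Q'$ is a dipath (hence a ditrail) from $u$ to $v'$ whose arcs cannot lie in $E(D)$, because each such arc is either $a$ or has at least one endpoint outside $V(D)$.

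The main obstacle is the second case above: one must choose $Q$ so that the augmented object is genuinely a ditrail with arcs disjoint from $E(D)$. The truncation-at-first-reentry trick handles this cleanly, and the observation that internal vertices of $P$ may re-enter $V(D)$ is irrelevant because the construction in $\sdigraph$ restricts only the arcs, not the vertices, of the added ditrail. Once this step is secured, both directions close without further subtlety.
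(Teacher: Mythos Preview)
Your argument is correct. However, the paper does not actually prove this theorem: it is stated as a known classical result (with a reference to Schrijver~\cite{schrijver2003}) and used as background, so there is no proof in the paper to compare against. Your write-up is the standard ear-decomposition argument and would serve perfectly well as a self-contained proof; the only minor remark is that in the ``only if'' direction your truncation already guarantees that all internal vertices of $P$ lie outside $V(D)$, so the closing comment about internal vertices possibly re-entering $V(D)$ is unnecessary (though not wrong).
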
  


Proposition~\ref{prop:order2stcomp} and Theorem~\ref{thm:ear} mean 
that we can construct a digraph with a desired strong connectivity from scratch.

\section{Digraphic Bidirected Graphs} \label{sec:digraphic}

A bidirected digraph is {\em digraphic} if every edge is a $(+, -)$-edge.  
Let $G$ be a digraphic bidirected graph.  
There are two ways to regard $G$ as a digraph. 
For $\alpha \in \{+, -\}$,   
we call $G$ an $\alpha$-digraphic bidirected graph 
if we consider $G$ as a digraph 
by  regarding each $(\alpha, -\alpha)$-edge 
as an arc whose tail and head are  the ends with signs $\alpha$ and $-\alpha$, respectively.

The strong connectivity and strong components can be defined straightforwardly  for digraphic bidirected graphs. 
These concepts are uniquely determined regardless of the choice of  sign $\alpha$. 
That is, two vertices $u$ and $v$ of $G$ are {\em strongly connected} 
if $G$ has  $(-, +)$-ditrails from $u$ to $v$ and from $v$ to $u$. 
We say that $G$  is {\em strongly connected} if every two vertices are strongly connected. 
A {\em strongly connected component} or {\em strong component}  of $G$ 
is a maximal strongly connected subgraph. 
We also denote the set of strong components of $G$ by $\stcomp{G}$.

In contrast, for a digraphic bidirected graph, 
the strong component decomposition can be defined in two ways, 
depending on the choice of  the sign $\alpha$.   
Now, let $\alpha\in\{+, -\}$, and let $G$ be an $\alpha$-digraphic bidirected graph. 
The binary relation $\st_\alpha$ over $\stcomp{G}$ is defined  in the same way as $\st$. 
That is, for $C, D\in\stcomp{G}$,  
we let $C \st_\alpha D$ if $C \st D$ holds in the $\alpha$-digraphic bidirected graph $G$. 
Proposition~\ref{prop:stdecomp} obviously implies that $\st_\alpha$ is a partial order over $\stcomp{G}$. 
Accordingly,  we denote the poset  $(\stcomp{G}, \st_\alpha)$  by $\parstset{G}{\alpha}$  
and call this the strong component decomposition of the $\alpha$-digraphic bidirected graph. 
Analogues of Proposition~\ref{prop:order2stcomp} and Theorem~\ref{thm:ear} also hold for $\alpha$-digraphic bidirected graphs.

\section{Flowgraphs} \label{sec:flowgraph}

Let $G$ be a digraph, and let $r\in V(G)$. 
The digraph $G$ is called a {\em flowgraph} with {\em root} $r$ 
if, for every $x\in V(G)$,  there is a directed trail from $x$ to $r$. 

Under Proposition~\ref{prop:stcomp2dipath},  it is easily observed that flowgraphs can be characterized as follows.  
A digraph $G$ with $r\in V(G)$ is a flowgraph with root $r$  
if and only if  
 $\stset{G}$ has the maximum element $C\in\stcomp{G}$, and the vertex $r$ is contained in $C$.  

Let $\alpha\in\{+, -\}$. 
An $\alpha$-digraphic bidirected graph is an $\alpha$-flowgraph with root $r$ 
if it is a flowgraph with root $r$. 
The characterization of flowgraphs also applies to $\alpha$-flowgraphs. Hence, the next proposition holds. 

\begin{proposition} \label{prop:flowgraph2char} 
Let $\alpha \in \{+, -\}$. 
An $\alpha$-digraphic bidirected graph $G$ with $r\in V(G)$ is an $\alpha$-flowgraph with root $r$  
if and only if  
 $\parstset{G}{\alpha}$ has the maximum element $C\in\stcomp{G}$, and the vertex $r$ is contained in $C$.  
\end{proposition}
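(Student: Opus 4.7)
The plan is to reduce Proposition~\ref{prop:flowgraph2char} to the ordinary digraph characterization stated just before it, by invoking the $\alpha$-digraphic correspondence introduced in Section~\ref{sec:digraphic}. Fix $\alpha\in\{+,-\}$ and let $G_\alpha$ denote the digraph obtained from $G$ by regarding each $(\alpha,-\alpha)$-edge as an arc from the end with sign $\alpha$ to the end with sign $-\alpha$. The first step is to observe that, by the definitions in Section~\ref{sec:digraphic}, the strong components of $G$ as a bidirected graph coincide with the strong components of the digraph $G_\alpha$, and the partial order $\st_\alpha$ is by construction the partial order $\st$ on $\stcomp{G_\alpha}$. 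Hence $\parstset{G}{\alpha}=\stset{G_\alpha}$.

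Next I would match the two notions of reachability. A $(-,+)$-ditrail in the bidirected graph $G$, when read via the $\alpha$-digraphic convention, is precisely a directed trail in $G_\alpha$: at every internal vertex the two incident terms have opposite signs, which is exactly the head/tail consistency required of a walk in a digraph, and the edge-repetition condition is identical in both frameworks. Consequently, $G$ admits a directed trail from a vertex $x$ to $r$ in the bidirected sense if and only if $G_\alpha$ admits a directed trail from $x$ to $r$. By the definition of $\alpha$-flowgraph, $G$ is an $\alpha$-flowgraph with root $r$ if and only if $G_\alpha$ is a flowgraph with root $r$.

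Finally, I would apply the characterization of flowgraphs recalled in the paragraph preceding the proposition: $G_\alpha$ is a flowgraph with root $r$ if and only if $\stset{G_\alpha}$ has a maximum element $C$ containing $r$. Combining this with the two identifications above gives the claim.

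The only delicate point — and hence the closest thing to an ``obstacle'' — is the precise verification that the bidirected and digraph notions of directed trail are interchangeable under the $\alpha$-digraphic convention; this is a matter of unravelling the definition of $\tilde\sigma$ in Section~\ref{sec:notation} and checking that the sign conditions at each internal vertex correspond exactly to the tail/head pattern of arcs in $G_\alpha$. Once this is in hand, the proof is essentially a transcription of the digraph statement.
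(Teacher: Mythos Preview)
Your proposal is correct and follows exactly the paper's approach: the paper does not give a formal proof but simply notes that ``the characterization of flowgraphs also applies to $\alpha$-flowgraphs,'' i.e., it treats the proposition as an immediate transcription of the digraph statement via the $\alpha$-digraphic correspondence of Section~\ref{sec:digraphic}. Your write-up just spells out this transcription in more detail than the paper bothers to.
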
 

Note that, under Proposition~\ref{prop:order2stcomp}, Theorem~\ref{thm:ear}, and Proposition~\ref{prop:flowgraph2char}, 
we can construct any $\alpha$-flowgraphs from scratch.

\section{Critical Graphs} \label{sec:critical}

Let $G$ be an (undirected) graph. 
Let $b: V(G) \rightarrow \nonnegz$. 
A set of edges $F\subseteq E(G)$ is a {\em $b$-factor} 
if, for each $v\in V(G)$, the number of edges from $F$ that are adjacent to $v$ is $b(v)$.  
A $b$-factor does not necessarily exist in the graph. 
For $x\in V(G)$, $b^x$ denotes a mapping $V(G) \rightarrow \nonnegz$ 
such that $b^x(v) = b(v)$ for each $v\in V(G)\setminus \{x\}$ 
and $b^x(x) = b(x) - 1$.  
We say that $G$ is {\em $b$-critical} or {\em critical} 
if, for each $x\in V(G)$,  there is a $b^x$-factor in $G$.

There is an easy one-to-one correspondence between a signed graph and a pair of a graph and a set of edges.  
Let $G$ be a graph, and let $F\subseteq E(G)$.  
The bidirected graph $G^F$ denotes the signed graph 
 with  $V(G^F) = V(G)$ and $E(G^F) = E(G)$ such that 
 $e\in E(G^F)$ is a $(-, -)$-edge for each $e\in F$, but is a $(+, +)$-edge for each $e\in E(G)\setminus F$. 

\begin{observation} \label{obs:critical2radial} 
Let $G$ be an (undirected) graph, and let $b: V(G) \rightarrow \nonnegz$.   
Let $r\in V(G)$, and let $F \subseteq E(G)$ be a $b^r$-factor of $G$. 
Then, $G$ is $b$-critical if and only if, 
for each $x\in V(G^F)$, 
there is a $(-, +)$-ditrail from $x$ to $r$ in $G^F$. 
\end{observation}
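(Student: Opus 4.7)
The plan is to translate between $b^x$-factors of $G$ and $(-,+)$-ditrails in $G^F$ using the classical symmetric-difference technique of matching theory.

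First, I would record the combinatorial shape of a ditrail in $G^F$. Since every edge of $G^F$ is either a $(-,-)$-edge (those in $F$) or a $(+,+)$-edge (those in $E(G)\setminus F$), the sign-alternation requirement at every internal term of a ditrail is equivalent to the condition that consecutive edges come alternately from $F$ and from $E(G)\setminus F$. A $(-,+)$-ditrail from $x$ to $r$ is therefore precisely an alternating trail whose first edge lies in $F$ and whose last edge lies in $E(G)\setminus F$; the case $x=r$ is covered by the trivial ditrail, which is both a $(-,+)$- and a $(+,-)$-ditrail by convention.

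For the ``if'' direction, given such a ditrail $T$, I would set $F_x := F \triangle E(T)$ and verify by a local degree count at each vertex $v$ that $F_x$ is a $b^x$-factor. The alternation of $T$ forces the number of $F$-edges of $T$ meeting $v$ and the number of non-$F$-edges of $T$ meeting $v$ to agree at every internal vertex of $T$, with a discrepancy of exactly one at $x$ (an excess $F$-edge) and at $r$ (an excess non-$F$-edge). Combining this with the hypothesis that $F$ is a $b^r$-factor yields the required equality at every vertex.

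For the ``only if'' direction, assume $G$ is $b$-critical, fix $x$, and let $F_x$ be a $b^x$-factor. I would color each edge of $F \triangle F_x$ red if it lies in $F$ and blue otherwise. The degree conditions on $F$ and $F_x$ show that the numbers of red and blue edges agree at every vertex outside $\{x,r\}$, with one excess red at $x$ and one excess blue at $r$. A standard Eulerian pairing argument then decomposes $F \triangle F_x$ into a single alternating trail from $x$ (starting red) to $r$ (ending blue) together with alternating closed trails; the distinguished trail is the desired $(-,+)$-ditrail. The case $x=r$ is again handled by the trivial ditrail.

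The main obstacle is the decomposition step in the ``only if'' direction: one must set up the pairing of red and blue edges at each vertex of $F \triangle F_x$, trace the resulting trail from $x$, and confirm that it can only terminate at $r$ without reusing an edge. This is a multigraph bookkeeping exercise analogous to the $M$-alternating path argument from matching theory, complicated slightly by loops and parallel edges but conceptually standard.
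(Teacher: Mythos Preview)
The paper does not supply a proof of this observation; it is stated as a fact and immediately used to motivate the definition of radials in the next section. Your proposal is correct and is exactly the standard symmetric-difference argument one would expect here: the identification of $(-,+)$-ditrails in $G^F$ with $F$-alternating trails is immediate from the construction of $G^F$, and the two directions then reduce to the classical bijection between near-perfect $b$-factors and alternating trails obtained via $F \triangle F_x$ and $F \triangle E(T)$.

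A minor point worth tightening: in the ``only if'' direction your pairing argument is the right idea, but you should make the termination explicit. Once one red half-edge at $x$ and one blue half-edge at $r$ are designated as unpaired and all remaining half-edges are matched red--blue at every vertex, the walk launched from the unpaired red half-edge at $x$ can only get stuck at the unpaired blue half-edge at $r$; in particular, revisiting $x$ always lands on a paired half-edge and the walk continues. Spelling this out (and noting that loops contribute two half-edges of the same colour at a single vertex, which is harmless for the pairing) removes the only place where a reader might hesitate.
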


\section{Radials and Semiradials} \label{sec:radial} 

Under Observation~\ref{obs:critical2radial}, 
we define the concept of radials 
as a common generalization of flowgraphs and critical graphs. 
We also define semiradials, which is a relaxed concept of radials.

\begin{definition} 
Let $G$ be a bidirected graph,  let $r\in V(G)$, and let $\alpha\in\{+, -\}$. 
We call $G$ an {\em $\alpha$-radial} with {\rm root} $r$ 
if, for every $v\in V(G)$,  there is an $(\alpha, -\alpha)$-ditrail from $v$ to $r$.  
We call $G$ an {\em $\alpha$-semiradial} with {\rm root} $r$ 
if, for every $v\in V(G)$,  there is an $\alpha$-ditrail from $v$ to $r$.  
\end{definition}  

From the definition, any $\alpha$-radial  is an $\alpha$-semiradial.

\section{Absolute Semiradials} \label{sec:absolute} 

In this section, we define absolute semiradials 
and provide their constructive characterization in Theorem~\ref{thm:asr} 
that is similar to ear decomposition.

\begin{definition} 
Let $G$ be a bidirected graph, and let $r\in V(G)$. 
We call $G$ an {\em absolute semiradial} with root $r$ 
if $G$ is an $\alpha$-semiradial with root $r$ for each $\alpha\in\{+, -\}$. 
\end{definition}

\begin{definition} 
We define a set $\sqrset{r}$ of bidirected graphs with vertex $r$ as follows: 
\begin{rmenum} 
\item 
The graph that consists of a single vertex $r$ and no edge is a member of $\sqrset{r}$. 
\item 
Let $H\in\sqrset{r}$, and 
let $P$ be a diear relative to $H$. 
Then, $H + P$ is a member of $\sqrset{r}$. 
\end{rmenum} 
\end{definition}

The following theorem is the constructive characterization of absolute semiradials.

\begin{theorem} \label{thm:asr} 
Let $r$ be a vertex symbol. 
Then, $\sqrset{r}$ is the set of absolute semiradials with root $r$. 
\end{theorem}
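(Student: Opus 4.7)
I prove both inclusions of Theorem~\ref{thm:asr}.

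\emph{Forward direction} ($\sqrset{r}$ is contained in the class of absolute semiradials with root $r$). I induct on the construction of $\sqrset{r}$. The single-vertex graph consisting only of $r$ is trivially absolute semiradial, since the length-zero ditrail at $r$ is both a $(+, -)$- and a $(-, +)$-ditrail. For the inductive step, suppose $H \in \sqrset{r}$ is already known to be absolute semiradial and let $P$ be a diear relative to $H$. I must show that every new vertex $v \in V(H + P) \setminus V(H)$ admits an $\alpha$-ditrail to $r$ in $H + P$ for each $\alpha \in \{+, -\}$. The key observation is that at an internal vertex $v$ of the diwalk $P$, the diwalk condition forces the two starting signs at $v$ produced by traversing $P$ in opposite directions to be distinct. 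Hence, for each $\alpha$, one direction yields an $\alpha$-subditrail from $v$ to some endpoint $u \in V(H)$ of $P$ ending at $u$ with some sign $\gamma$. Concatenating with a $(-\gamma)$-ditrail in $H$ from $u$ to $r$, which exists by the inductive hypothesis, yields the required $\alpha$-ditrail; the diwalk condition at the junction $u$ holds by the choice of signs, and edge-disjointness is guaranteed by $E(P) \cap E(H) = \emptyset$. The scoop case is analogous: the two directions around the middle closed ditrail give opposite starting signs at each new interior vertex, and a short sign computation at the closure vertex shows that both arrivals there have signs forcing the adjacent edge in the scoop to be traversable as the grip, after which one proceeds to the anchor vertex in $V(H)$ and then by a suitable ditrail in $H$.

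\emph{Reverse direction.} I use strong induction on $|V(G)| + |E(G)|$. The base case is immediate. For the inductive step, let $H$ be a maximal subgraph of $G$ belonging to $\sqrset{r}$; such an $H$ exists because the single-vertex graph at $r$ is in $\sqrset{r}$. I aim to show $H = G$. Suppose toward contradiction $H \ne G$. If $V(H) = V(G)$, pick $e \in E(G) \setminus E(H)$: both ends of $e$ lie in $V(H)$ and the single-edge walk $(u, e, v)$ is a simple diear relative to $H$, contradicting maximality. Otherwise $V(H) \subsetneq V(G)$; pick $v^* \in V(G) \setminus V(H)$ and, by the absolute semiradial property of $G$, take ditrails $T_+$ and $T_-$ from $v^*$ to $r$ with starting signs $+$ and $-$ respectively. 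Let $S_\alpha$ be the initial segment of $T_\alpha$ terminating at its first vertex in $V(H)$; every edge of $S_\alpha$ has an endpoint outside $V(H)$. The diwalk $\reverse{S_-} + S_+$ has its ends in $V(H)$ and passes through $v^*$, where the diwalk condition holds because the incoming sign from $\reverse{S_-}$ is $-$ and the outgoing sign into $S_+$ is $+$. If $S_+$ and $S_-$ are edge-disjoint, this diwalk is a simple diear relative to $H$, contradicting the maximality of $H$.

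The main obstacle is the case in which $S_+$ and $S_-$ share edges, so $\reverse{S_-} + S_+$ is not a ditrail. To resolve this, I examine the earliest vertex $w$ after $v^*$ at which $S_+$ and $S_-$ meet; such a $w$ exists since both walks terminate in $V(H)$. If $w \in V(H)$, the initial sub-segments of $S_+$ and $S_-$ up to $w$ are vertex-disjoint save at $v^*$ and $w$, hence edge-disjoint, and together form a closed ditrail at $w$ that is a simple diear relative to $H$. If $w \notin V(H)$, the initial sub-segments up to $w$ yield an edge-disjoint closed diwalk through $w$ and $v^*$, which, coupled with an edge of $S_+$ or $S_-$ leaving $w$ toward $V(H)$, can be fashioned into a scoop diear with that edge as the grip. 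Verifying that the sign conditions at the grip's endpoints align with the scoop definition and that this structure produces a legitimate scoop relative to $H$ in $G$ is the central technical step; the allowance of scoop diears in the definition of $\sqrset{r}$ is presumably tailored precisely to cover this overlap scenario, and a careful case analysis of the possible overlap patterns of $S_+$ and $S_-$ should show that one of these two cases always delivers the desired diear, completing the contradiction to maximality.
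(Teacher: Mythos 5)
Your forward direction is essentially the paper's argument (induction along the construction of $\sqrset{r}$, splitting each diear at a new vertex into two sub-ditrails with opposite starting signs and prolonging each into $H$), and your overall strategy for the converse --- take a maximal subgraph $H\in\sqrset{r}$ of $G$ and contradict maximality by exhibiting a diear relative to $H$ --- is also the paper's. The gap is in how you produce that diear. The paper isolates this as a separate lemma and proves it by a different, cleaner device: it picks a single cut edge $e\in\parcut{G}{H}$ with ends $x\in V(H)$ and $y\notin V(H)$, takes \emph{one} ditrail $P$ from $y$ to $r$ whose starting sign at $y$ is opposite to the sign of $y$ over $e$ (this is exactly where absoluteness of $G$ is used), and truncates $P$ at the first vertex of $V(H)\cup\{y\}$; the two outcomes are precisely a simple diear $(x,e,y)+yPz$ or a scoop diear with grip $e$. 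No two-trail overlap analysis is ever needed.

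Your version, which splices $\reverse{S_-}+S_+$ at a vertex $v^*\notin V(H)$, breaks down exactly at the step you yourself flag as "the central technical step." In the case where $S_+$ and $S_-$ first meet at a vertex $w\notin V(H)$, the object you propose --- a closed diwalk through $w$ and $v^*$ together with "an edge of $S_+$ or $S_-$ leaving $w$ toward $V(H)$" --- is not a scoop diear. By definition a scoop diear is $(v,e,w_3,\ldots,w_{k-2},e,v)$ where the grip $e$ is a \emph{single edge of the cut} $\parcut{G}{V(H)}$ and the closed ditrail is attached at the outer end of $e$; your vertex $w$ need not be adjacent to $V(H)$ at all, so the edge of $S_+$ leaving $w$ generally lands at another vertex outside $V(H)$ and cannot serve as a grip. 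In addition, the sign condition that the closed ditrail must start and end at $w_3$ with the sign opposite to $\sigma(w_3;e)$ (so that $e$ can be traversed out and back) is never verified, and your disjointness claim for the two initial segments up to $w$ does not follow from "$w$ is the earliest meeting vertex" when $w\notin V(H)$, since the $S_-$-segment up to $w$ may still revisit later vertices of $S_+$. The case analysis you defer to is therefore not a routine verification but the missing core of the proof; the repair is to abandon the two-trail construction and argue from a cut edge as in Lemma~\ref{lem:asr2const}.
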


In the following, we prove Theorem~\ref{thm:asr}. 
The next lemma proves a half of Theorem~\ref{thm:asr}. 

\begin{lemma} \label{lem:const2asr} 
If a bidirected graph $G$ is a member of $\sqrset{r}$, 
then $G$ is an absolute semiradial. 
\end{lemma}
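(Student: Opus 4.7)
The proof plan is to proceed by induction on the construction of $G$ as a member of $\sqrset{r}$. The base case is immediate: if $G$ consists of the single vertex $r$ with no edges, the trivial ditrail $(r)$ is by definition both a $(+,-)$- and a $(-,+)$-ditrail, so $G$ is an $\alpha$-semiradial with root $r$ for each $\alpha\in\{+,-\}$. For the inductive step, I will write $G = H + P$ with $H\in\sqrset{r}$ and $P$ a diear relative to $H$; the inductive hypothesis states that $H$ is an absolute semiradial with root $r$. Given $v\in V(G)$ and $\alpha\in\{+,-\}$, I must produce an $\alpha$-ditrail from $v$ to $r$ in $G$. For $v\in V(H)$ the inductive hypothesis already supplies one, so only $v\in V(P)\setminus V(H)$ requires work.

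For a simple diear $P=(w_1,\ldots,w_k)$ with $w_1,w_k\in V(H)$ and $v=w_j$ an internal vertex: the ditrail condition at $w_j$ forces $\se{w_j}{w_{j-1}}\neq\se{w_j}{w_{j+1}}$, so exactly one of the two sub-ditrails $w_j\reverse{P}w_1$ and $w_jPw_k$ starts at $v$ with sign $\alpha$. Let $u$ be the endpoint reached and $\beta$ the sign of $u$ on the final edge; a $(-\beta)$-ditrail from $u$ to $r$ in $H$ is supplied by the inductive hypothesis, and its concatenation with the chosen sub-ditrail is the desired $\alpha$-ditrail from $v$ to $r$. The sign condition at the splice vertex $u$ holds since $\beta\neq-\beta$, and edges do not repeat since $E(P)$ and $E(H)$ are disjoint.

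For a scoop diear $P=(v_0,e,w_3,\ldots,w_{k-2},e,v_0)$ with grip $e$ and inner closed ditrail $C=(w_3,\ldots,w_{k-2})$, the plan hinges on one sign observation: the diwalk conditions applied to $P$ at positions $3$ and $k-2$ both force the incident edge of $C$ at $w_3=w_{k-2}$ to carry the sign $-\se{w_3}{e}$. Hence $C$ leaves and returns to $w_3$ with the common sign $-\se{w_3}{e}$. With this in hand, for $v$ an interior vertex of $C$ I would mimic the simple-diear argument within $C$ to obtain, for the prescribed $\alpha$, a sub-ditrail from $v$ to $w_3$ whose terminal sign at $w_3$ is $-\se{w_3}{e}$; then append $e$ (the splice at $w_3$ is valid since $\se{w_3}{e}\neq-\se{w_3}{e}$) followed by a $(-\gamma)$-ditrail from $v_0$ to $r$ in $H$ from the inductive hypothesis, where $\gamma=\se{v_0}{e}$. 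Edge disjointness across the three pieces is immediate from the diear definition. For the boundary case $v=w_3$ itself, I would split on $\alpha$: if $\alpha=\se{w_3}{e}$, begin with $e$ directly; otherwise first traverse $C$ once (starting and ending with sign $-\se{w_3}{e}=\alpha$), then proceed with $e$ and the $H$-ditrail.

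I expect the main obstacle to be the sign bookkeeping in the scoop case. While a closed ditrail may a priori leave and return to its base vertex with independent signs, embedding it inside $P$ as a scoop forces the two endpoint signs at $w_3$ to coincide; this uniformity is precisely what allows the splice with $e$ at $w_3$ to go through irrespective of which direction one traverses $C$, and it is the only nonroutine observation in the argument.
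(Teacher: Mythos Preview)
Your proof is correct and follows the same inductive approach as the paper: both argue that from any new vertex $v\in V(P)\setminus V(H)$ one can extract, for each $\alpha$, an $\alpha$-ditrail within $P$ to a vertex of $H$, and then concatenate with a ditrail in $H$ supplied by the inductive hypothesis. The paper merely asserts that such a ditrail ``can be easily confirmed'' to exist, whereas you spell out the case analysis (simple versus scoop) and make explicit the key sign observation in the scoop case---that $C$ enters and leaves $w_3$ with the common sign $-\se{w_3}{e}$---which is precisely the content the paper suppresses.
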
 

\begin{proof} 
We proceed by induction along the constructive definition of $\sqrset{r}$.  
For the base case, where $V(G)  = \{r\}$, the statement trivially holds. 
For proving the induction case, 
let $G \in \sqrset{r}$, and assume that $G$ is an absolute semiradial with root $r$. 
Let $P$ be a diear relative to $G$, and let $\hat{G} := G + P$. 
If $x\in V(\hat{G})$ is from $V(G)$, 
then there are clearly $+$- and $-$-ditrails from $x$ to $r$. 
Next, consider the case where  $x\in V(\hat{G})\cap V(P)$. 
For each $\alpha \in \{+, -\}$, 
it can  be  easily confirmed that $P$ contains an $\alpha$-ditrail $Q$ from $x$ to a vertex $y\in V(G)\cap V(P)$. 
As $G$ is an absolute semiradial, 
$G$ has an $-\se{y}{Q}$-ditrail $R$ from $y$ to $r$.  
Thus, $Q + R$ is an $\alpha$-ditrail from $x$ to $r$. 
This completes the proof.

\end{proof} 


In the following, we prove the remaining half of Theorem~\ref{thm:asr}. 
The next lemma is for proving Lemma~\ref{lem:asr2set}.

\begin{lemma} \label{lem:asr2const} 
If $G$ is an absolute semiradial with root $r$, 
then, for any subgraph $H$ of $G$ with $H \neq G$ that is an absolute semiradial with root $r$, 
there is a diear relative to $H$.  
Furthermore, if $V(H) \subsetneq V(G)$ holds, 
then there is a diear relative to $H$ that has a vertex in $V(G)\setminus V(H)$. 
\end{lemma}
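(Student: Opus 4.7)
The plan is to split on whether $V(H)$ equals $V(G)$. If $V(H)=V(G)$ but $E(H)\subsetneq E(G)$, I pick any $e\in E(G)\setminus E(H)$; the single-edge diwalk consisting of $e$ with its two endpoints is a simple ditrail whose edge lies outside $E(H)$ and whose endpoints lie in $V(H)$, so it is a simple diear relative to $H$.

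For the main case $V(H)\subsetneq V(G)$, I aim to produce a simple diear containing a vertex of $V(G)\setminus V(H)$, which simultaneously settles both parts of the claim. Pick any $v\in V(G)\setminus V(H)$. Since $G$ is an absolute semiradial with root $r\in V(H)$, there exist a $+$-ditrail $W^{+}$ and a $-$-ditrail $W^{-}$ from $v$ to $r$. For each $\alpha\in\{+,-\}$ let $u^{\alpha}$ be the first vertex of $W^{\alpha}$ that lies in $V(H)$, and set $T^{\alpha}:=vW^{\alpha}u^{\alpha}$. By the choice of $u^{\alpha}$, all internal vertices of $T^{\alpha}$ lie in $V(G)\setminus V(H)$, so every edge of $T^{\alpha}$ has an endpoint outside $V(H)$. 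Form $P:=\reverse{T^{+}}+T^{-}$, a walk from $u^{+}$ through $v$ to $u^{-}$. At $v$ the last edge of $\reverse{T^{+}}$ has sign $+$ at $v$ while the first edge of $T^{-}$ has sign $-$, so the diwalk condition at $v$ is satisfied, and $P$ is a diwalk whose ends are in $V(H)$, whose edges all sit outside $E(G[V(H)])$, and which passes through $v\in V(G)\setminus V(H)$.

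If $T^{+}$ and $T^{-}$ are edge-disjoint then $P$ is a simple diear relative to $H$ containing $v$, and we are done. To enforce edge-disjointness I would minimise a suitable complexity measure over the choice of $v,W^{+},W^{-}$ (for instance $|E(T^{+})|+|E(T^{-})|$, broken by the number of shared edges). Assuming an edge $e$ is shared, I would split both $T^{+}$ and $T^{-}$ at $e$ and swap the segments after $e$; a case analysis on whether $e$ is traversed in the same or opposite direction in the two ditrails, combined with a check of the sign conditions at the two endpoints of $e$, would produce a modified pair of $\alpha$-ditrails from $v$ to $V(H)$ that strictly reduces the measure, contradicting minimality.

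The hard part will be the sub-case where the naive swap either fails to decrease the measure or breaks the diwalk condition — most notably when the shared edge is traversed in opposite directions in $T^{+}$ and $T^{-}$, so that sign mismatches appear at the swap points. I expect the fallback in that situation to be a scoop diear: one takes as grip an edge $e\in\parcut{G}{V(H)}$ incident to an appropriate vertex of $V(H)$ and extracts the required closed ditrail at the $V(G)\setminus V(H)$-endpoint of $e$ from the overlapping portions of $T^{+}$ and $T^{-}$, using the absolute semiradial property of $G$ to verify the sign constraints at the grip endpoint. Coordinating the simple-diear and scoop-diear constructions so that at least one succeeds for the minimal choice is the central obstacle.
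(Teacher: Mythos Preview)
Your handling of the case $V(H)=V(G)$ matches the paper and is correct.

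For the case $V(H)\subsetneq V(G)$, your approach is substantially more complicated than necessary, and as you yourself recognise, the minimisation-and-swap argument is not completed: you have not shown that the swap preserves the ditrail property (no repeated edges), nor that the sign conditions at the splice points can always be met, nor that the fallback scoop construction actually succeeds when the swap fails. As written this is a genuine gap, not a routine verification.

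The missing idea is that you do not need two ditrails from an arbitrary $v\in V(G)\setminus V(H)$. Instead, since $G$ is connected, pick an edge $e\in\parcut{G}{V(H)}$ with ends $x\in V(H)$ and $y\in V(G)\setminus V(H)$; let $\gamma$ be the sign of $y$ over $e$. Because $G$ is an absolute semiradial, there is a single $(-\gamma)$-ditrail $P$ from $y$ to $r$. Trace $P$ from $y$ and let $z$ be the first vertex term (after the initial one) lying in $V(H)\cup\{y\}$. If $z\in V(H)$, then $(x,e,y)+yPz$ is a simple diear relative to $H$ containing $y\in V(G)\setminus V(H)$. If $z=y$, then $(x,e,y)+yPz+(z,e,x)$ is a scoop diear relative to $H$ with grip $e$. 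Either way you are done, with no edge-disjointness issues to resolve, because only one ditrail is used and it is prefixed by a single cut edge. The edge $e$ plays the role that your entire $T^{+}$ was meant to play; anchoring the construction at the boundary rather than in the interior is what collapses the argument.
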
 

\begin{proof} 
If $V(H) = V(G)$,  then any edge from $E(G)\setminus E(H)$ forms a diear relative to $H$. 
Hence, in the following, consider the case where $V(H)\subsetneq V(G)$. 
As $G$ is obviously connected, there is an edge $e\in \parcut{G}{H}$. 
Let $x\in V(H)$ and $y\in V(G)\setminus V(H)$ be the ends of $e$, 
and let $\beta$ and $\gamma$ be the signs of $x$ and $y$ over $e$, respectively. 
Let $P$ be a $-\gamma$-ditrail in $G$ from $y$ to $r$. 
Trace $P$ from $y$, and let $z$ be the first encountered vertex in $V(H) \cup \{y\}$. 
If $z$ is equal to $y$, then $(x, e, y) + yPz + (z, e, x)$ forms a scoop diear relative to $H$ whose grip is $e$. 
Otherwise, $(x, e, y) + yPz$ is a simple diear relative to $H$.   
This completes the proof. 
\end{proof} 


The next lemma proves the remaining half of Theorem~\ref{thm:asr}. 

\begin{lemma} \label{lem:asr2set} 
If $G$ is an absolute semiradial with root $r\in V(G)$, 
then $G\in\sqrset{r}$ holds. 
\end{lemma}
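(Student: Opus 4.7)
The plan is to prove the lemma by constructing an explicit sequence of subgraphs of $G$ witnessing that $G$ lies in $\sqrset{r}$, using Lemma~\ref{lem:asr2const} as the engine that supplies one diear at each step and Lemma~\ref{lem:const2asr} to recycle the invariant.

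First I would set $H_0$ to be the graph consisting of the single vertex $r$ and no edge, which is in $\sqrset{r}$ by the base case of its definition and is trivially an absolute semiradial with root $r$. It is also a subgraph of $G$. I would then inductively define a sequence $H_0 \subseteq H_1 \subseteq H_2 \subseteq \cdots$ of subgraphs of $G$ as follows: given $H_i$ with $H_i \neq G$, apply Lemma~\ref{lem:asr2const} to obtain a diear $P_i$ relative to $H_i$ in $G$, and set $H_{i+1} := H_i + P_i$. By the inductive construction of $\sqrset{r}$, $H_{i+1} \in \sqrset{r}$, and by Lemma~\ref{lem:const2asr}, $H_{i+1}$ is an absolute semiradial with root $r$, so the invariant ``$H_i \in \sqrset{r}$, $H_i$ is an absolute semiradial with root $r$, and $H_i$ is a subgraph of $G$'' is preserved.

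Next I would argue termination. Inspecting the proof of Lemma~\ref{lem:asr2const}, the diear $P_i$ it supplies always contains at least one edge of $G$ that does not belong to $E(H_i)$: in the case $V(H_i) = V(G)$ it is a single edge of $E(G) \setminus E(H_i)$, and in the case $V(H_i) \subsetneq V(G)$ it contains the cut edge $e \in \parcut{G}{H_i}$. Hence $|E(H_{i+1})| > |E(H_i)|$ at every step, and since $E(G)$ is finite the process must halt after finitely many iterations. It halts only when $H_N = G$, because otherwise Lemma~\ref{lem:asr2const} would still apply. Therefore $G = H_N \in \sqrset{r}$, as required.

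I do not expect any serious obstacle: Lemma~\ref{lem:asr2const} is precisely designed for this step, and the closure of $\sqrset{r}$ under diear addition makes the induction immediate. The only point that needs mild care is checking that the diear produced by Lemma~\ref{lem:asr2const} lies entirely in $G$ and shares no edge with $G[H_i]$, so that adding it preserves the ``subgraph of $G$'' invariant and properly counts as a diear in the sense of the definition; both are explicit in the construction within the proof of Lemma~\ref{lem:asr2const}.
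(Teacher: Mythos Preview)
Your proposal is correct and follows essentially the same approach as the paper. The paper phrases the argument as a maximality contradiction (take a maximal subgraph $H$ of $G$ lying in $\sqrset{r}$, and if $H\neq G$ use Lemma~\ref{lem:asr2const} and Lemma~\ref{lem:const2asr} to enlarge it), whereas you unwind this into an explicit iterative construction $H_0\subseteq H_1\subseteq\cdots$ with a termination argument via edge count; these are two standard packagings of the same idea, and your version is arguably a bit more explicit about why the process terminates.
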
 

\begin{proof} 
Let $H$ be a maximal subgraph of $G$ that is an absolute semiradial with root $r$; 
such $H$ certainly exists, because $G[r]$ is trivially an absolute semiradial. 
Suppose $H \neq G$. 
Lemma~\ref{lem:asr2const} implies that there is a diear $P$ relative to $H$. 
According to Lemma~\ref{lem:const2asr}, $H + P$ is a subgraph of $G$ that is an absolute semiradial with root $r$. 
This contradicts the maximality of $H$. Hence, $H = G$. 
The proof is complete.
\end{proof} 


From Lemmas~\ref{lem:const2asr} and \ref{lem:asr2set}, Theorem~\ref{thm:asr} is now proved.

\section{Strong Radials} \label{sec:strong} 

In this section, we define the concept of {\em strong} radials, 
which is a special class of absolute semiradials, 
and provide their constructive characterization, 
which is a special case of Theorem~\ref{thm:asr}.

\begin{definition} 
Let $\alpha\in\{+, -\}$. 
An $\alpha$-radial $G$ with root $r\in V(G)$ is said to be {\em strong} 
if,  for every $v\in V(G)$,  there is a $(-\alpha, -\alpha)$-ditrail from $v$ to $r$. 
\end{definition} 

Note that, from the definition, any strong radial is an absolute semiradial. 

\begin{definition} 
Let $r$ be a vertex symbol, and let $\alpha \in \{+, -\}$. 
We define a set $\acset{r}{\alpha}$ of bidirected graphs that have vertex $r$ as follows: 
\begin{rmenum} 
\item \label{item:base} 
A $(-\alpha, -\alpha)$-simple diear  relative to $r$  is an element of $\acset{r}{\alpha}$. 
\item \label{item:inductive}
If $G\in \acset{r}{\alpha}$ holds and $P$ is a diear   relative to $G$, 
then $G + P \in \acset{r}{\alpha}$ holds. 
\end{rmenum}
\end{definition}

The next lemma characterizes the relationship between strong radials and absolute semiradials.

\begin{lemma} \label{lem:str2absr} 
Let $\alpha\in\{+,-\}$. 
A bidirected graph $G$ with a vertex $r\in V(G)$ is a strong $\alpha$-radial with root $r$ 
if and only if 
$G$ is an absolute semiradial with root $r$ 
that has  a $(-\alpha, -\alpha)$-closed ditrail over $r$. 
\end{lemma}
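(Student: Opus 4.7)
The plan is to prove the equivalence by verifying each direction separately. The forward direction follows immediately from unpacking the definitions, while the backward direction requires a splicing argument using the closed ditrail $C$.

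For the forward direction, I would observe that if $G$ is a strong $\alpha$-radial with root $r$, then for every $v \in V(G)$ the defining $(\alpha, -\alpha)$- and $(-\alpha, -\alpha)$-ditrails from $v$ to $r$ are in particular an $\alpha$-ditrail and a $-\alpha$-ditrail, respectively. This shows $G$ is simultaneously an $\alpha$- and $-\alpha$-semiradial, hence an absolute semiradial. Instantiating the strong condition at $v = r$ then yields the required $(-\alpha, -\alpha)$-closed ditrail over $r$.

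For the backward direction, assume $G$ is an absolute semiradial with root $r$ equipped with a $(-\alpha, -\alpha)$-closed ditrail $C$ over $r$. I need to exhibit, for each $v \in V(G)$, both an $(\alpha, -\alpha)$- and a $(-\alpha, -\alpha)$-ditrail from $v$ to $r$. The case $v = r$ is handled by the trivial ditrail and by $C$, respectively. For $v \neq r$, I would begin with an $\alpha$-ditrail $P$ from $v$ to $r$ supplied by the $\alpha$-semiradial property; if $\se{r}{P} = -\alpha$ we are done, so assume $\se{r}{P} = \alpha$. I then trace $P$ from $v$ and let $u$ be the first vertex on $P$ that also belongs to $V(C)$; such a $u$ exists, since $r \in V(P) \cap V(C)$. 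I would splice the prefix $vPu$ with a suitably chosen sub-ditrail of $C$ running from $u$ back to $r$. The construction of the $(-\alpha, -\alpha)$-ditrail is entirely analogous, starting instead from a $-\alpha$-ditrail guaranteed by the $-\alpha$-semiradial hypothesis.

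The main obstacle is verifying that the spliced object is indeed a ditrail of the correct sign type. For edge-disjointness, I would use the observation that every edge of $vPu$ has an endpoint in $V(P) \setminus V(C)$, so no edge of $vPu$ can lie in $C$. For the sign compatibility at $u$, I would invoke the diwalk axiom for $C$: when $u$ is an interior vertex of $C$, the two edges of $C$ incident to $u$ carry distinct signs at $u$, so exactly one of the two traversal directions of $C$ from $u$ matches the sign required to splice with the arriving edge of $vPu$. Either direction of $C$ from $u$ reaches $r$ with sign $-\alpha$, since both endpoints of $C$ meet $r$ with sign $-\alpha$. The degenerate case $u = r$ is handled directly by the same sign bookkeeping: both directions of $C$ from its endpoint $r$ depart with sign $-\alpha$, which is exactly what is needed to splice onto $P$ or $Q$ arriving at $r$ with sign $\alpha$.
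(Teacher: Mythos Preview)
Your proposal is correct and follows essentially the same route as the paper: for the nontrivial direction, take an $\alpha$- (respectively $-\alpha$-) ditrail $P$ from $v$ to $r$, trace it to the first vertex $u$ lying on $C$, and splice the prefix $vPu$ with whichever of the two portions $uCr$, $u\reverse{C}r$ has the compatible sign at $u$. Your write-up is in fact more explicit than the paper's—it spells out the edge-disjointness of $vPu$ and $C$ and notes that the analogous construction must also be run to obtain the $(-\alpha,-\alpha)$-ditrail, whereas the paper records only the $(\alpha,-\alpha)$ case.
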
 

\begin{proof} 
The sufficiency  obviously holds. 
For proving the necessity, 
let $x\in V(G)$,  let $P$ be an $\alpha$-ditrail of $G$ from $x$ to $r$, 
and let $C$ be a $(-\alpha, -\alpha)$-closed ditrail over $r$.  
Trace $P$ from $x$, and let $y$ be the first encountered vertex in $C$. 
Then, either $xPy + yCr$ or $xPy + y\reverse{C}r$ is an $(\alpha, -\alpha)$-ditrail from $x$ to $r$. 
Hence, $G$ is a strong $\alpha$-radial with root $r$. 
This completes the proof. 
\end{proof} 


Lemma~\ref{lem:str2absr} derives Theorem~\ref{thm:str} as follows. 

\begin{theorem} \label{thm:str} 
Let $r$ be a vertex symbol. 
Then, $\acset{r}{\alpha}$ is the set of strong $\alpha$-radials with root $r$. 
\end{theorem}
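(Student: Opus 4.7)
The plan is to leverage Lemma~\ref{lem:str2absr} to reduce the theorem to a statement about absolute semiradials that contain a $(-\alpha,-\alpha)$-closed ditrail over $r$, and then mirror the ear-decomposition argument used to prove Theorem~\ref{thm:asr}, with the only difference being that the construction now starts from such a closed ditrail rather than from the isolated root $r$.

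For the forward inclusion $\acset{r}{\alpha}\subseteq\{\text{strong $\alpha$-radials with root $r$}\}$, I would proceed by induction along the constructive definition of $\acset{r}{\alpha}$. In the base case, a $(-\alpha,-\alpha)$-simple diear relative to $\{r\}$ is exactly a $(-\alpha,-\alpha)$-closed ditrail over $r$; together with the vertex $r$ this is a graph in $\sqrset{r}$ obtained by adding one diear to the trivial element $\{r\}$. Hence, by Theorem~\ref{thm:asr}, it is an absolute semiradial with root $r$; and it contains a $(-\alpha,-\alpha)$-closed ditrail over $r$ by construction, so Lemma~\ref{lem:str2absr} yields that it is a strong $\alpha$-radial. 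The inductive step is immediate: adding a diear relative to $G$ keeps us inside $\sqrset{r}$, preserves the distinguished closed ditrail over $r$, and so the resulting graph remains a strong $\alpha$-radial by Lemma~\ref{lem:str2absr}.

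For the reverse inclusion, suppose $G$ is a strong $\alpha$-radial with root $r$. By Lemma~\ref{lem:str2absr}, $G$ is an absolute semiradial containing some $(-\alpha,-\alpha)$-closed ditrail $C$ over $r$. Let $H_0$ be the subgraph $\{r\}+C$; this is, by definition, a member of $\acset{r}{\alpha}$ via the base case. The plan is to mimic the proof of Lemma~\ref{lem:asr2set}: take a maximal subgraph $H$ of $G$ with $H_0\subseteq H$ such that $H$ can be obtained from $H_0$ by successively adding dieares, and show $H=G$. For this it suffices that $H$ always remains an absolute semiradial with root $r$, so that Lemma~\ref{lem:asr2const} provides a further diear whenever $H\neq G$, contradicting maximality.

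The only non-routine step — and the main obstacle — is justifying that the base graph $H_0$ is itself an absolute semiradial with root $r$, since this is what makes Lemma~\ref{lem:asr2const} applicable at the start of the iteration. This is handled by observing that $H_0\in\sqrset{r}$ (it is built from $\{r\}$ by adding the single diear $C$), and then invoking Lemma~\ref{lem:const2asr}; alternatively, one verifies directly that for any vertex $v$ on the closed ditrail $C$ over $r$, tracing $C$ forward and backward from $v$ yields ditrails to $r$ starting with opposite signs, so both a $+$-ditrail and a $-$-ditrail from $v$ to $r$ exist. Once this is established, the inductive construction preserves the property of being in $\sqrset{r}$, hence of being an absolute semiradial by Lemma~\ref{lem:const2asr}, and the argument closes in the same way as Lemma~\ref{lem:asr2set}.
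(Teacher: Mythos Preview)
Your proposal is correct and follows essentially the same approach as the paper: both directions hinge on Lemma~\ref{lem:str2absr} to identify strong $\alpha$-radials with absolute semiradials containing a $(-\alpha,-\alpha)$-closed ditrail over $r$, and then use the ear-decomposition machinery (Lemma~\ref{lem:const2asr}, Lemma~\ref{lem:asr2const}, Theorem~\ref{thm:asr}) to pass between $\acset{r}{\alpha}$ and this class. The paper compresses your maximality argument for the reverse inclusion into one line (``Lemma~\ref{lem:asr2const} implies that $G$ can be constructed as a member of $\sqrset{r}$ in which $C$ is the initial diear''), but the underlying reasoning is identical.
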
 
\begin{proof} 
Let $G\in \acset{r}{\alpha}$. 
Under Theorem~\ref{thm:asr},  
$G$ is an absolute semiradial with root $r$ that has a $(-\alpha, -\alpha)$-closed ditrail over $r$. 
Thus, Lemma~\ref{lem:str2absr} implies that $G$ is a strong $\alpha$-radial with root $r$. 

Next, let $G$ be a strong $\alpha$-radial with root $r$. 
According to Lemma~\ref{lem:str2absr}, 
$G$ is an absolute semiradial with root $r$ and has a $(-\alpha, -\alpha)$-closed ditrail $C$ over $r$.  
Obviously,  $C$ itself is an absolute semiradial with root $r$.  
Hence, Lemma~\ref{lem:asr2const} implies that 
$G$ can be constructed as a member of $\sqrset{r}$ in which $C$ is the initial diear relative to $r$. 
That is, $G$ is a member of $\acset{r}{\alpha}$. 
\end{proof} 

\section{Almost Strong Radials} \label{sec:astr} 
\subsection{Characterization of Almost Strong Radials} \label{sec:astr:overview} 

In this section, we define another special class of absolute semiradials, {\em almost strong radials}, 
and provide their constructive characterization. 
We show that almost strong radials are constructed using strong radials.

\begin{definition} 
Let $\alpha \in \{+, -\}$. 
An $\alpha$-radial $G$ with root $r$ is said to be {\em almost strong} 
if, for every $v\in V(G)\setminus \{r\}$, 
there is a $(-\alpha, -\alpha)$-ditrail from $v$ to $r$, 
however there is no closed $(-\alpha, -\alpha)$-ditrail over $r$. 
\end{definition} 

\begin{definition} 
Let $\alpha\in\{+, -\}$. 
Define a set $\nacset{r}{\alpha}$ of bidirected graphs with a vertex $r$ as follows: 
\begin{rmenum} 
\item \label{item:const:base} Let $\beta \in \{+, -\}$,  let $r'$ be a vertex symbol distinct from $r$, 
let $G\in \acset{r'}{\beta}$ be a bidirected graph with $r\not\in V(G)$, 
and let $rr'$ be an edge for which the signs of $r$ and $r'$ are $-\alpha$ and $\beta$, respectively. 
Then, $G + rr'$ is a member of $\nacset{r}{\alpha}$. 
\item \label{item:const:edge}
Let $G\in \nacset{r}{\alpha}$, 
let $v\in V(G)$,  
and let $rv$ be an edge in which the sign of $r$ is  $\alpha$. 
Then, $G + rv$ is a member of $\nacset{r}{\alpha}$. 
\item \label{item:const:vertex} 
Let $G_1, G_2\in \nacset{r}{\alpha}$ be bidirected graphs with $V(G_1)\cap V(G_2) = \{r\}$. 
Then, $G_1 + G_2$ is a member of $\nacset{r}{\alpha}$.  
\end{rmenum} 
\end{definition} 

The following theorem is our constructive characterization of almost strong radials. 

\begin{theorem} \label{thm:asr2char} 
Let $r$ be a vertex symbol. 
Let $\alpha \in \{+, -\}$.  
Then, $\nacset{r}{\alpha}$ is the set of almost strong $\alpha$-radials with root $r$. 
\end{theorem}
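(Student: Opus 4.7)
My plan is to establish both inclusions of the claimed set equality. For the soundness direction (every $G \in \nacset{r}{\alpha}$ is an almost strong $\alpha$-radial with root $r$), I proceed by induction on the constructive definition of $\nacset{r}{\alpha}$, verifying each rule preserves the defining conditions. Rule (i): the strong $\beta$-radial $H$ gives, for each $v \in V(H)$, both a $(\beta, -\beta)$- and a $(-\beta, -\beta)$-ditrail to $r'$; extending through $rr'$ (whose sign $\beta$ at $r'$ differs from the ditrail's terminating $-\beta$, so concatenation is valid) yields $(\beta, -\alpha)$- and $(-\beta, -\alpha)$-ditrails from $v$ to $r$, which jointly supply the required $(\alpha, -\alpha)$- and $(-\alpha, -\alpha)$-ditrails. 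No closed $(-\alpha, -\alpha)$-ditrail over $r$ exists because $rr'$ is the sole edge at $r$. Rule (ii): the added edge $rv$ has sign $\alpha$ at $r$, so it is neither the first nor last edge of any putative $(-\alpha, -\alpha)$-closed ditrail over $r$; if used internally, splitting the ditrail at that $r$-visit yields a smaller $(-\alpha, -\alpha)$-closed ditrail in the prior graph, contradicting the inductive hypothesis. Rule (iii): any $(-\alpha, -\alpha)$-closed ditrail in $G_1 + G_2$ decomposes by its visits to $r$ into pieces each contained wholly in one of $G_1$ or $G_2$; a sign-alternation argument starting from the initial $-\alpha$ propagates across successive $r$-visits, forcing the last piece to end at $r$ with sign $\alpha$, contradicting the required $-\alpha$.

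For the completeness direction (every almost strong $\alpha$-radial with root $r$ lies in $\nacset{r}{\alpha}$), I induct on $|E(G)|$, with the main case split being whether $r$ is a cut vertex of $G$. If $r$ is a cut vertex, take a block $B$ over $r$ and let $G'$ denote the subgraph formed by the remaining blocks together with $r$. The key lemma is that both $B$ and $G'$ are themselves almost strong $\alpha$-radials with root $r$: truncating a $(\pm\alpha, -\alpha)$-ditrail from $v \in V(B) \setminus \{r\}$ to $r$ at its first visit to $r$ yields a subditrail entirely within $B$, and a sign-propagation argument across the closed pieces between consecutive $r$-visits (any such piece of type $(-\alpha, -\alpha)$ would contradict the almost strong condition on $G$) forces the truncated ditrail to end with sign $-\alpha$. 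Induction on $B$ and $G'$ followed by rule (iii) yields $G \in \nacset{r}{\alpha}$. If $r$ is not a cut vertex, so $G$ is a single block over $r$, I establish the structural claim that $r$ has exactly one edge with sign $-\alpha$ at $r$, call it $rr'$. Given this, if $r$ has an $\alpha$-edge $e$, every $(\pm\alpha, -\alpha)$-ditrail from any $v$ to $r$ visits $r$ only at its end (an internal $r$-visit would force a second use of the unique $-\alpha$-edge $rr'$, violating the trail condition) and hence avoids $e$; thus $G - e$ remains an almost strong $\alpha$-radial, and induction with rule (ii) yields $G \in \nacset{r}{\alpha}$. If $r$'s only edge is $rr'$, set $H := G - r$; stripping off the final $rr'$ from each $G$-ditrail from $u \in V(H)$ to $r$ produces a ditrail in $H$ from $u$ to $r'$ ending with sign $-\beta$ (where $\beta$ is $r'$'s sign on $rr'$), whence $H$ is a strong $\beta$-radial with root $r'$ and rule (i) completes the case.

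The main obstacle is the structural uniqueness claim in the single-block case. Existence of a $-\alpha$-edge at $r$ is immediate from the $(-\alpha, -\alpha)$-ditrail guaranteed from any $v \neq r$. For uniqueness, suppose distinct $-\alpha$-edges $e_1 = ru_1$ and $e_2 = ru_2$ exist at $r$, and let $\gamma_i$ denote the sign of $u_i$ on $e_i$; the goal is to construct a $(-\alpha, -\alpha)$-closed ditrail over $r$ of the form $(r, e_1, u_1) + Q + (u_2, e_2, r)$, where $Q$ is a $(-\gamma_1, -\gamma_2)$-ditrail from $u_1$ to $u_2$ in $G - \{e_1, e_2\}$. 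I extract $Q$ by analyzing the $(-\gamma_1, -\alpha)$-ditrail from $u_1$ to $r$ supplied by the almost strong property: when this ditrail's last edge differs from $e_1$, the prefix minus its final edge essentially yields $Q$; the delicate sub-case is when every such ditrail from $u_1$ terminates through $e_1$, which requires rerouting via alternative ditrails and exploiting the connectedness of $G - r$ in the single-block setting.
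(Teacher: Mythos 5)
Your overall architecture coincides with the paper's: the soundness direction is proved by induction on the three construction rules (your arguments for rules (ii) and (iii) are exactly the paper's Lemmas~\ref{lem:asr2edgeadd} and~\ref{lem:asr2blockadd}, and your sign-alternation arguments are correct), and the completeness direction reduces, via block decomposition at $r$ and deletion of $\alpha$-signed edges at $r$, to the case of a single block in which $r$ meets only $-\alpha$-signed edges. Those reductions, and your derivation that $G-r$ is a strong $\beta$-radial once $rr'$ is known to be the sole edge at $r$, are all sound and match Lemmas~\ref{lem:rootsimple}--\ref{lem:asr2blockdecomp} and the use of Theorem~\ref{thm:str} to invoke rule~(i).

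The genuine gap is exactly where you flag it: the uniqueness of the $-\alpha$-signed edge at $r$ inside a single block, which is the content of the paper's Lemma~\ref{lem:soleneck} and is the technical heart of the whole proof. Your plan --- take the $(-\gamma_1,-\alpha)$-ditrail $P$ from $u_1$ to $r$ and strip its last edge to obtain the connector $Q$ --- fails in general: the last edge of $P$ need not be $e_2$ (it is only some $-\alpha$-signed edge at $r$, possibly $e_1$ itself), the prefix need not avoid $e_1$, and in the "delicate sub-case" the connectedness of $G-r$ only supplies an \emph{undirected} path between $u_1$ and $u_2$, which cannot in general be upgraded to a ditrail with the prescribed end-signs $-\gamma_1$ and $-\gamma_2$. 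The paper's resolution is a different and essential idea: by Lemma~\ref{lem:asr2baloon}, each end $x_i$ of a $-\alpha$-signed edge $e_i$ at $r$ carries a $(-\beta_i,-\beta_i)$-closed ditrail avoiding $r$, hence (via Lemma~\ref{lem:str2absr} and Theorem~\ref{thm:str}) a nonempty \emph{maximal} strong $\beta_i$-radial $K_i$ rooted at $x_i$. The strong-radial property is precisely what converts any point of contact between $K_{e}$ and $K_{f}$ --- a shared vertex, a connecting edge, or a connecting ditrail --- into a correctly signed ditrail from $x_e$ to $x_f$ and hence into a forbidden $(-\alpha,-\alpha)$-closed ditrail over $r$ (Claims~\ref{claim:soleneck:disjoint} and~\ref{claim:soleneck:nobridge}); maximality of the $K_i$ then forces them to cover $V(G)\setminus\{r\}$ (Claim~\ref{claim:soleneck:all}), so the $K_i$ are the connected components of $G-r$ and each block contains exactly one $-\alpha$-signed edge at $r$. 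Without introducing these maximal strong radials (or an equivalent device that turns undirected connectivity into sign-controlled ditrails), the rerouting you gesture at does not go through, so the completeness direction of your proof is incomplete at its decisive step.
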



Theorem~\ref{thm:asr2char} can be immediately obtained  from the following two lemmas. 

\begin{lemma} \label{lem:critical2const} 
Let $r$ be a vertex symbol. 
Let $\alpha\in\{+, -\}$. 
Any almost strong $\alpha$-radial $G$ is an element of $\nacset{r}{\alpha}$. 
\end{lemma}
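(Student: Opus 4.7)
The plan is to proceed by induction on $|V(G)| + |E(G)|$, reversing each of the three constructive steps of $\nacset{r}{\alpha}$ according to the structure of $G$ around $r$.

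First, suppose $G$ has an edge $e$ incident to $r$ with sign $\alpha$ at $r$. I would show $G - e$ is again an almost strong $\alpha$-radial with root $r$; by induction $G - e \in \nacset{r}{\alpha}$, and step (ii) of $\nacset{r}{\alpha}$ recovers $G$. The key observation is that any ditrail $P$ ending at $r$ with sign $-\alpha$ cannot use $e$ as its last edge (since $e$ has sign $\alpha$ at $r$); so if $P$ uses $e$, it does so strictly inside $P$. A case split on the direction in which $P$ traverses $e$ shows that either $P$ contains a $(-\alpha, -\alpha)$-closed ditrail over $r$ as a sub-walk (contradicting the almost strong property) or a proper prefix of $P$ ending at $r$ is itself a ditrail of the same type as $P$ but avoiding $e$. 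Iterating on length yields ditrails in $G - e$ of all the required types.

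Second, suppose $G$ has no $\alpha$-edges at $r$ and $r$ is a cut vertex. Let $G_1, \ldots, G_k$ denote the blocks of $G$ over $r$. I would show each $G_i$ is an almost strong $\alpha$-radial with root $r$: for any $v \in V(G_i) \setminus \{r\}$, the prefix of any ditrail from $v$ to $r$ in $G$ up to its first visit to $r$ stays within $G_i$ (since leaving $G_i$ requires passing through $r$), and, since all edges at $r$ have sign $-\alpha$ at $r$, it ends at $r$ with sign $-\alpha$, giving the required ditrail type. The non-existence of $(-\alpha, -\alpha)$-closed ditrails over $r$ descends to subgraphs. By induction each $G_i \in \nacset{r}{\alpha}$, and iterated application of step (iii) yields $G$.

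Third, suppose $G$ has no $\alpha$-edges at $r$ and $r$ is not a cut vertex. The key claim is that there is exactly one edge at $r$; I expect this to be the main obstacle. I would argue by contradiction: suppose distinct edges $e_1 = rv_1$ and $e_2 = rv_2$ at $r$ exist, both with sign $-\alpha$ at $r$, and let $\beta_1, \beta_2$ be the signs of $v_1, v_2$ on $e_1, e_2$ respectively. Since no $\alpha$-edges at $r$ exist, no ditrail in $G$ passes through $r$ in the middle (two consecutive edges at $r$ would both have sign $-\alpha$, violating the ditrail condition), so every ditrail from $v \neq r$ to $r$ decomposes as a ditrail in $G - r$ followed by a single edge at $r$. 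Taking a $(-\beta_1, -\alpha)$-ditrail $P = T + f$ from $v_1$ to $r$, the case $f \neq e_1$ makes the concatenation of $(r, e_1)$ with $P$ a $(-\alpha, -\alpha)$-closed ditrail over $r$, violating the almost strong property; hence $f = e_1$ and $T$ is a $(-\beta_1, -\beta_1)$-closed ditrail over $v_1$ in $G - r$. A symmetric argument produces a $(-\beta_2, -\beta_2)$-closed ditrail $S$ over $v_2$ in $G - r$. The crux is then to exploit the connectivity of $G - r$ combined with suitable reversals of $T$ and $S$ to exhibit a ditrail in $G - r$ from $v_1$ to $v_2$ with start sign $-\beta_1$ at $v_1$ and end sign $-\beta_2$ at $v_2$; sandwiching it between $e_1$ and $e_2$ gives the forbidden closed ditrail over $r$.

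Once the claim is established, let $e = rv$ be the unique edge at $r$ and let $\beta$ be the sign of $v$ on $e$. I would verify via Lemma~\ref{lem:str2absr} that $G - r$ is a strong $\beta$-radial with root $v$: absolute semiradiality of $G - r$ follows by decomposing ditrails to $r$ in $G$ and stripping the final edge $e$, while a $(-\beta, -\beta)$-closed ditrail over $v$ in $G - r$ arises by stripping $e$ from whichever of the $(\alpha, -\alpha)$- or $(-\alpha, -\alpha)$-ditrails from $v$ to $r$ in $G$ starts with sign $-\beta$ at $v$. Then $G - r \in \acset{v}{\beta}$ by Theorem~\ref{thm:str}, and step (i) of $\nacset{r}{\alpha}$ produces $G$.
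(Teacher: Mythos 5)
Your overall reduction is the same as the paper's: induct on $|V(G)|+|E(G)|$, strip edges at $r$ whose sign at $r$ is $\alpha$ (the paper's Lemma~\ref{lem:asr2edgedel}, derived from Lemma~\ref{lem:rootsimple}), split into blocks over $r$ (Lemma~\ref{lem:asr2blockdecomp}), and in the remaining case show there is a unique edge at $r$ and that $G-r$ is a strong radial rooted at its other end, so that construction~\ref{item:const:base} applies. Those outer steps of yours are fine, and your final verification that $G-r$ is a strong $\beta$-radial once uniqueness is known is also correct.

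The genuine gap is exactly the step you yourself flag as ``the crux'': producing, from the two closed ditrails $T$ and $S$ and the connectivity of $G-r$, a ditrail in $G-r$ from $v_1$ to $v_2$ that starts with sign $-\beta_1$ and ends with sign $-\beta_2$. In a bidirected graph, undirected connectivity gives no control over signs: a connecting walk between $V(T)$ and $V(S)$ need not be directable at all, and ``suitable reversals of $T$ and $S$'' cannot fix the signs along edges outside $T\cup S$. So as written the argument for uniqueness of the edge at $r$ (the content of the paper's Lemma~\ref{lem:soleneck}, which is the heart of the whole proof) is missing, not merely unpolished. The paper closes this by using much more than $T$, $S$ and connectivity: for each $-\alpha$-edge $i$ at $r$ with other end $x_i$ it takes the \emph{maximal} subgraph $K_i$ that is a strong $\beta_i$-radial with root $x_i$ (nonempty by Lemma~\ref{lem:asr2baloon} and Theorem~\ref{thm:str}), and then uses the strong-radial property of the $K_i$ (ditrails to $x_i$ with \emph{both} possible start signs) together with first-encounter tracing along the ditrails guaranteed by the almost-strong property of $G$ to prove that the $K_i$ are pairwise disjoint, admit no connecting ditrail, and cover $V(G)\setminus\{r\}$; maximality of $K_i$ plus Theorem~\ref{thm:str} is what absorbs any would-be connection. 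Uniqueness of the edge per block and the strong-radial structure of $C-r$ then fall out simultaneously. To repair your proof you would need to import this (or an equivalent) mechanism that converts reachability-to-$r$ in $G$ into sign-prescribed reachability inside $G-r$; the closed ditrails $T$ and $S$ alone do not suffice.
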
 


\begin{lemma} \label{lem:const2critical} 
Let $r$ be a vertex symbol. 
Let $\alpha\in\{+, -\}$. 
Then, any element of $\nacset{r}{\alpha}$ is an almost strong $\alpha$-radial with root $r$. 
\end{lemma}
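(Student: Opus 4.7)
The plan is to proceed by structural induction along the three construction rules defining $\nacset{r}{\alpha}$, verifying at each step the three conditions characterizing an almost strong $\alpha$-radial with root $r$: (a) an $(\alpha,-\alpha)$-ditrail from each vertex to $r$, (b) a $(-\alpha,-\alpha)$-ditrail from each vertex other than $r$ to $r$, and (c) the absence of any closed $(-\alpha,-\alpha)$-ditrail over $r$. For the base rule \ref{item:const:base}, where $\hat{G}=G+rr'$ with $G\in\acset{r'}{\beta}$ and the new edge $rr'$ carrying signs $-\alpha$ at $r$ and $\beta$ at $r'$, Theorem~\ref{thm:str} supplies, for every $v\in V(G)$, both a $(\beta,-\beta)$- and a $(-\beta,-\beta)$-ditrail from $v$ to $r'$, together with closed versions at $v=r'$. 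Splitting into the subcases $\beta=\alpha$ and $\beta=-\alpha$, I would select the appropriate ditrail and possibly prepend a closed ditrail over $r'$ so that it terminates at $r'$ with sign $-\beta$; the edge $rr'$ (carrying $\beta$ at $r'$) then extends it to a valid ditrail ending at $r$ with sign $-\alpha$. Condition (c) is immediate here since $rr'$ is the unique edge incident to $r$ and no trail can reuse an edge.

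For rule \ref{item:const:edge}, $\hat{G}=G+rv$ has $V(\hat{G})=V(G)$, so (a) and (b) inherit directly from $G$. For (c), suppose $\hat G$ admits a closed $(-\alpha,-\alpha)$-ditrail $W$ over $r$. If $W$ avoids $rv$, it lies in $G$, contradicting the inductive hypothesis. Otherwise $rv$ appears at a unique edge term $w_j$. Since the sign of $r$ on $rv$ is $\alpha$ while the start and end signs of $W$ at $r$ are both $-\alpha$, neither $j=2$ nor $j=k-1$ is possible; thus $rv$ sits at an internal visit of $W$ to $r$, and the diwalk sign-alternation condition at that vertex term forces the other $r$-incident edge there to carry sign $-\alpha$ at $r$. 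Cutting $W$ at this visit yields a non-trivial closed $(-\alpha,-\alpha)$-ditrail over $r$ that avoids $rv$ and hence lies in $G$, contradicting the inductive hypothesis.

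For rule \ref{item:const:vertex}, $\hat{G}=G_1+G_2$ with $V(G_1)\cap V(G_2)=\{r\}$: any vertex $v\neq r$ lies in exactly one $G_i$, so properties (a) and (b) transfer from the inductive hypothesis on that $G_i$. For (c), suppose $W$ is a closed $(-\alpha,-\alpha)$-ditrail over $r$ in $\hat{G}$. Because $G_1$ and $G_2$ share only $r$, any $r$-free segment of $W$ lies in a single $G_i$; decomposing $W$ at its successive visits to $r$ produces closed sub-ditrails $W_1,\ldots,W_m$ each entirely contained in one $G_i$. Writing $s_i,e_i$ for the signs of $r$ over the first and last edges of $W_i$, we have $s_1=e_m=-\alpha$, and the diwalk condition forces $e_i\neq s_{i+1}$ for $i<m$. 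A short induction then shows that some $W_i$ satisfies $s_i=e_i=-\alpha$: otherwise the pattern $s_i=-\alpha,e_i=\alpha$ propagates and forces $e_m=\alpha$, contradicting $e_m=-\alpha$. This $W_i$ is a closed $(-\alpha,-\alpha)$-ditrail over $r$ in $G_1$ or $G_2$, contradicting the inductive hypothesis.

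The main obstacle is verifying the forbidden-closure condition (c), particularly in rule \ref{item:const:edge}. The subtle point is showing that the newly added edge $rv$ cannot occupy the first or last edge slot of any putative bad closed ditrail at $r$; once this is established, the sign-alternation rule at internal visits to $r$ enables a clean ``surgery'' that extracts a strictly smaller counterexample lying inside $G$, closing the induction.
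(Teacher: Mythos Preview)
Your proposal is correct and follows essentially the same approach as the paper. The paper organizes the induction by isolating the two inductive steps as separate lemmas (Lemma~\ref{lem:asr2edgeadd} for rule~\ref{item:const:edge} and Lemma~\ref{lem:asr2blockadd} for rule~\ref{item:const:vertex}) and dismisses the base case~\ref{item:const:base} as ``easily confirmed,'' but the actual arguments match yours: for rule~\ref{item:const:edge} the paper orients the hypothetical bad closed ditrail so that the new edge appears as $(x,e,r)$ and takes the subtrail that follows, which is exactly your surgery at the internal $r$-visit; for rule~\ref{item:const:vertex} the paper's ``simple counting argument'' is your sign-propagation showing some $W_i$ must be $(-\alpha,-\alpha)$.
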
 


In Sections~\ref{sec:astr:decomp} and \ref{sec:astr:const},  we prove Lemmas~\ref{lem:critical2const} and \ref{lem:const2critical}, respectively, 
and thus complete the proof of Theorem~\ref{thm:asr2char}.

\subsection{Decomposition of Almost Strong Radials} \label{sec:astr:decomp}

This section is devoted to proving Lemma~\ref{lem:critical2const}. 

In the following, we provide and prove Lemmas~\ref{lem:rootsimple} to \ref{lem:soleneck} 
and thus prove Lemma~\ref{lem:critical2const}. 

The next lemma is used for deriving Lemmas~\ref{lem:asr2edgedel} and \ref{lem:asr2baloon}. 

\begin{lemma} \label{lem:rootsimple}
Let $\alpha \in \{+, -\}$. 
Let $G$ be an almost strong $\alpha$-radial with root $r\in V(G)$. 
For any $x\in V(G)\setminus \{r\}$, $G$ has $(\alpha, -\alpha)$- and $(-\alpha, -\alpha)$-ditrails from $x$ to $r$  
in each of which the vertex $r$ is contained only once. 
\end{lemma}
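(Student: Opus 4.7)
The plan is to extract both ditrails by truncating an existing one at its first visit to the root $r$, and to rule out the ``wrong'' terminal sign by producing a forbidden $(-\alpha,-\alpha)$-closed ditrail over $r$.

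For the $(\alpha,-\alpha)$-part, I would begin with any $(\alpha,-\alpha)$-ditrail $P=(w_1,e_1,\ldots,e_{k-1},w_k)$ from $x$ to $r$ (given by the $\alpha$-radial hypothesis), take $i$ to be the smallest index with $w_i=r$, and set $P_1:=w_1 P w_i$. Then $P_1$ contains $r$ only once (at its final vertex) and is certainly an $\alpha$-ditrail, so the whole issue reduces to showing its terminal sign at $r$ is $-\alpha$. I would argue by contradiction: if the sign of $r$ over $e_{i-1}$ were $\alpha$, then since $w_i$ is an internal vertex of $P$, the diwalk condition forces the sign of $r$ over $e_i$ to be $-\alpha$; consequently the subwalk $w_i P w_k$ would be a closed ditrail over $r$ whose two end-signs are both $-\alpha$, contradicting the defining ``no closed $(-\alpha,-\alpha)$-ditrail over $r$'' clause of almost strongness.

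For the $(-\alpha,-\alpha)$-part, I would apply the identical truncation to a $(-\alpha,-\alpha)$-ditrail from $x$ to $r$ (which exists by almost strongness since $x\neq r$). The same reasoning — the wrong terminal sign of the truncated initial segment would produce a forbidden $(-\alpha,-\alpha)$-closed ditrail over $r$ as the residual piece — rules out the bad case.

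The only delicate point, and essentially the entire content of the argument, is the sign bookkeeping at the truncation vertex: one must use the diwalk condition at the internal occurrence $w_i$ of $r$ in $P$ to deduce that the residual closed portion has both end-signs equal to $-\alpha$. Once this observation is in place, no further structural machinery from the theory of bidirected graphs is needed, and both parts of the lemma follow immediately.
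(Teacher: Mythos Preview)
Your proposal is correct and matches the paper's argument essentially verbatim: the paper also truncates a $(\beta,-\alpha)$-ditrail at the first occurrence of $r$ and observes that a terminal sign of $\alpha$ there would make the residual portion a forbidden $(-\alpha,-\alpha)$-closed ditrail over $r$. The only cosmetic difference is that the paper treats the two cases simultaneously by letting $\beta\in\{+,-\}$ rather than writing out the $(\alpha,-\alpha)$- and $(-\alpha,-\alpha)$-cases separately.
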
 

\begin{proof} 
Let $\beta\in\{+, -\}$ and  $x\in V(G)\setminus \{r\}$. 
Let $P$ be a $(\beta, -\alpha)$-ditrail from $x$ to $r$. 
Trace $P$ from $x$, and let $s$ be the first encountered vertex that is equal to $r$. 
If the sign of $s$ over $xPs$ is $\alpha$, then $sPr$ is a $(-\alpha, -\alpha)$-closed ditrail over $r$, which is a contradiction. 
Hence, the sign of $s$ over $xPs$ is $-\alpha$, 
and accordingly,  $xPs$ is a desired ditrail. 
\end{proof} 


Lemma~\ref{lem:rootsimple} easily implies the next two lemmas. 
These two lemmas are directly used  for proving Lemma~\ref{lem:critical2const}. 

\begin{lemma} \label{lem:asr2edgedel} 
Let $\alpha \in \{+, -\}$. 
Let $G$ be an almost strong $\alpha$-radial with root $r$. 
Let $F\subseteq \parcut{G}{r}$ be a set of edges in which the sign of $r$ is $\alpha$. 
Then, $G-F$ is an almost strong $\alpha$-radial with root $r$. 
\end{lemma}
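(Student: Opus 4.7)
The plan is to observe that each of the three defining conditions of an almost strong $\alpha$-radial survives the removal of $F$, with the first two handled uniformly using Lemma~\ref{lem:rootsimple}.

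First I would dispose of the closed-ditrail condition: any closed $(-\alpha, -\alpha)$-ditrail over $r$ in $G - F$ would also be such a ditrail in $G$, contradicting that $G$ is almost strong. So $G - F$ has no such closed ditrail.

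Next, for the two reachability conditions, fix $v \in V(G) \setminus \{r\}$ and $\beta \in \{+, -\}$, and apply Lemma~\ref{lem:rootsimple} to obtain a $(\beta, -\alpha)$-ditrail $P$ from $v$ to $r$ in $G$ in which $r$ appears only once. Since $r$ appears only at the final vertex term, the only edge of $P$ incident to $r$ is the last edge $e$, and the sign of $r$ over $e$ is $-\alpha$ (this is precisely what it means for $P$ to end with sign $-\alpha$). By the hypothesis on $F$, every edge of $F$ has $r$ with sign $\alpha \neq -\alpha$, so $e \notin F$. Hence $E(P) \cap F = \emptyset$, and $P$ survives in $G - F$. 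Applied with $\beta = \alpha$ and $\beta = -\alpha$, this gives the required $(\alpha, -\alpha)$- and $(-\alpha, -\alpha)$-ditrails from $v$ to $r$ in $G - F$. The case $v = r$ for the $\alpha$-radial condition is covered by the trivial ditrail, which is by convention a $(+,-)$- and $(-,+)$-ditrail.

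There is no real obstacle here; the only point requiring care is that Lemma~\ref{lem:rootsimple} guarantees $r$ appears \emph{only once} in the ditrail, which is what lets us conclude that the single edge of the ditrail at $r$ has the ``outgoing at $r$'' sign $-\alpha$ and therefore lies outside $F$. Combining the three observations yields that $G - F$ is an almost strong $\alpha$-radial with root $r$.
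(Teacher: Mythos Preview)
Your proof is correct and matches the paper's intended approach: the paper does not spell out a proof but simply states that Lemma~\ref{lem:rootsimple} easily implies this lemma, and your argument is exactly the natural way to cash that implication out. The key observation you isolate---that a ditrail containing $r$ only once meets $\parcut{G}{r}$ in a single edge whose sign at $r$ is $-\alpha$, hence avoids $F$---is precisely the point.
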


\begin{lemma} \label{lem:asr2blockdecomp} 
Let $\alpha \in \{+, -\}$. 
Let $G$ be an almost strong $\alpha$-radial with root $r\in V(G)$. 
Then, each block over $r$ is an almost strong $\alpha$-radial with root $r$. 
\end{lemma}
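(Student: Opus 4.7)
The plan is to derive Lemma~\ref{lem:asr2blockdecomp} directly from Lemma~\ref{lem:rootsimple}, using the standard fact that in a connected graph with cut vertex $r$, any two vertices lying in different blocks over $r$ can only be connected by a walk that revisits $r$. If $r$ is not a cut vertex, $G$ itself is the unique block over $r$ and there is nothing to prove, so I assume throughout that $r$ is a cut vertex and let $B = G[V(C) \cup \{r\}]$ for a connected component $C$ of $G - r$.

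The verification then proceeds by checking the three defining properties of an almost strong $\alpha$-radial with root $r$ for the subgraph $B$. For the third property, the absence of a closed $(-\alpha, -\alpha)$-ditrail over $r$ in $B$ is inherited immediately from $G$, because any such ditrail in $B$ would also be a ditrail in $G$. For the first two properties, given any $v \in V(B) \setminus \{r\}$, I would invoke Lemma~\ref{lem:rootsimple} to obtain, in $G$, both an $(\alpha, -\alpha)$-ditrail and a $(-\alpha, -\alpha)$-ditrail from $v$ to $r$ in each of which $r$ appears exactly once (namely, at the end).

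The key step is to argue that each such ditrail $P$ actually lies inside $B$. Since $r$ occurs only as the terminal vertex of $P$, every other vertex of $P$ belongs to $V(G) \setminus \{r\}$, and the subwalk of $P$ obtained by deleting the final vertex is a connected subgraph of $G - r$ containing $v$. Because $v \in V(C)$ and $C$ is a connected component of $G - r$, this subgraph is fully contained in $V(C)$; together with the terminal $r$, this places $P$ entirely inside $V(B)$, and thus in $B$. Therefore the ditrails supplied by Lemma~\ref{lem:rootsimple} serve as witnesses for the radial properties of $B$.

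The main obstacle, such as it is, lies only in justifying cleanly this containment argument---i.e., that a ditrail meeting $r$ only at its endpoint cannot cross from one block over $r$ to another. This is a routine consequence of the definition of blocks over a cut vertex, so no substantive difficulty is expected; the work done in Lemma~\ref{lem:rootsimple} does essentially all of the heavy lifting.
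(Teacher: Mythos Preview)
Your proposal is correct and follows exactly the route the paper indicates: the paper does not spell out a proof of Lemma~\ref{lem:asr2blockdecomp} but simply states that it is an easy consequence of Lemma~\ref{lem:rootsimple}, and the argument you give---that a ditrail meeting $r$ only at its terminal vertex must stay within a single connected component of $G-r$---is precisely the intended justification.
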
 


The next lemma is provided for proving Lemma~\ref{lem:soleneck}. 

\begin{lemma} \label{lem:asr2baloon} 
Let $\alpha \in \{+, -\}$. 
Let $G$ be an almost strong $\alpha$-radial with root $r$. 
If $V(G)\setminus \{r\} \neq \emptyset$, then 
$\parcut{G}{r}$ contains an edge $e$ in which the sign of $r$  is $-\alpha$.  
Furthermore, for such $e$, there is a $-\alpha$-scoop diear relative to $r$ whose grip is $e$. 
This diear does not have vertex terms that denote $r$ except for the first and last ones. 
\end{lemma}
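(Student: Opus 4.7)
The plan addresses the two assertions in turn, both using Lemma~\ref{lem:rootsimple} as the main engine. For the existence of the edge, I would pick an arbitrary $v\in V(G)\setminus\{r\}$ and apply Lemma~\ref{lem:rootsimple} at $v$ to obtain an $(\alpha, -\alpha)$-ditrail from $v$ to $r$ in which $r$ appears only as the terminal vertex term. Its last edge then lies in $\parcut{G}{r}$, and its sign at $r$ is $-\alpha$ by the definition of the terminal sign.

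For the scoop diear, fix such an edge $e\in\parcut{G}{r}$, let $y$ be its other endpoint, and let $\gamma$ be the sign of $y$ over $e$. Apply Lemma~\ref{lem:rootsimple} at $y$ to obtain a $(-\gamma, -\alpha)$-ditrail $P = (w_1, \ldots, w_k)$ from $y$ to $r$ in which $r$ occurs only as $w_k$. The crucial claim is that $e$ must be the final edge $w_{k-1}$ of $P$. Indeed, if $e\notin E(P)$, then because the sign of $y$ flips from $\gamma$ over $e$ to $-\gamma$ over the first edge of $P$, the concatenation $(r, e, y) + P$ is a legitimate closed ditrail over $r$, and both of its terminal signs at $r$ equal $-\alpha$; thus it is a $(-\alpha, -\alpha)$-closed ditrail over $r$, contradicting the hypothesis that $G$ is almost strong. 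Since $r$ appears in $P$ only at the end, $e$ is forced to lie at position $k-1$, so $w_{k-1} = e$ and $w_{k-2} = y$.

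To finish, let $Q := (w_1, \ldots, w_{k-2})$ be the prefix of $P$ obtained by deleting $w_{k-1}$ and $w_k$. Then $Q$ is a closed ditrail from $y$ to $y$ that avoids both $r$ and $e$, and both of its signs at $y$ are $-\gamma$: the initial one is inherited from $P$, and the terminal one is obtained from the internal-vertex condition applied to $P$ at $w_{k-2}$ together with the fact that the sign of $y$ over $w_{k-1}=e$ is $\gamma$. Consequently $(r, e, y) + Q + (y, e, r)$ is a $-\alpha$-scoop diear relative to $r$ with grip $e$, and since $Q$ is disjoint from $r$, the scoop has no intermediate vertex term denoting $r$. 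The main obstacle is really the single observation that $e = w_{k-1}$: once that is secured via the almost strong hypothesis, everything else reduces to short sign-bookkeeping.
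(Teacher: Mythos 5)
Your proof is correct and takes essentially the same route as the paper: both arguments take a ditrail $P$ from the non-root end $y$ of $e$ to $r$ starting with the sign opposite to $\se{y}{e}$ and ending with $-\alpha$, use the absence of $(-\alpha,-\alpha)$-closed ditrails over $r$ to force $e$ to be the last edge of $P$, and extract the remaining closed ditrail over $y$ as the body of the scoop. Your explicit appeal to Lemma~\ref{lem:rootsimple} to make $r$ occur only once in $P$ is exactly the streamlining the paper intends, so no further comment is needed.
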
 

\begin{proof} 
The first statement is obvious from the assumption on $G$. 
Let $e\in\parcut{G}{r}$ be an edge in which the sign of $r$ is $-\alpha$,  let $x$ be the end of $e$ other than $r$, 
and let $\beta \in \{+, -\}$ be the sign of $x$ over $e$. 
According to the assumption on $G$, there is a $(-\beta, -\alpha)$-ditrail $P$ from $x$ to $r$.  
\begin{pclaim} \label{claim:asr2baloon:rootsimple}
$P$ does not contain any $-\beta$-ditrail from $x$ to $r$ that does not contain $e$. 
\end{pclaim} 
\begin{proof} 
Suppose that $P'$ is a $-\beta$-ditrail from $x$ to $r$ without $e$. 
If $P'$ is a $(-\beta, -\alpha)$-ditrail, then 
$(r, e, x) + P$ is a $(-\alpha, -\alpha)$-closed ditrail over $r$, 
which is a contradiction. 
If $P'$ is a $(-\beta, \alpha)$-ditrail, 
then $P$ contains a $(-\alpha, -\alpha)$-closed ditrail over $r$, which is again a contradiction. 
Hence, the claim is proved. 
\end{proof} 

Claim~\ref{claim:asr2baloon:rootsimple} implies that 
$P$ contains a subtrail  $C + (x, e, r)$, 
where $C$ is a $(-\beta, -\beta)$-closed ditrail over $x$ that does not contain $r$.  
Therefore, $(r, e, x) + C + (x, e, r)$ forms a $-\alpha$-scoop diear that meets the claim.

\end{proof} 


Lemma~\ref{lem:asr2baloon}, together with Theorem~\ref{thm:str},  
derives the next lemma. This lemma is directly used for proving Lemma~\ref{lem:critical2const}. 

\begin{lemma} \label{lem:soleneck}
Let $\alpha \in \{+, -\}$. 
Let $G$ be an almost strong $\alpha$-radial with root $r$. 
Then, for each block $C$ over $r$,  
$\parcut{C}{r}$ contains exactly one edge $e_C$ such that the sign of $r$ over $e_C$ is $-\alpha$. 
Furthermore, 
$C - r$ is a strong $\beta_C$-radial with root $x_C$, 
where $x_C$ is the end of $e$ other than $r$ and $\beta_C$ is the sign of $x_C$ over $e_C$. 
\end{lemma}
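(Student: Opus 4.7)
The plan is to begin by invoking Lemma~\ref{lem:asr2blockdecomp} to reduce to the case where $G - r$ is connected (a single block over $r$); the claim then becomes that $\parcut{G}{r}$ contains exactly one $-\alpha$-signed edge $e_C$ at $r$, and that $G - r$ is a strong $\beta_C$-radial with root $x_C$, where $x_C$ is the other end of $e_C$ and $\beta_C$ is its sign there. Existence of at least one such edge follows immediately from Lemma~\ref{lem:asr2baloon} (assuming $V(G) \neq \{r\}$).

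For uniqueness I argue by contradiction: suppose $e_1, e_2 \in \parcut{G}{r}$ are distinct, both with sign $-\alpha$ at $r$, with other ends $x_1, x_2$ and signs $\beta_1, \beta_2$ there. The guiding idea is that a $(-\beta_1, -\beta_2)$-ditrail $Q$ from $x_1$ to $x_2$ in $G - r$ would yield a $(-\alpha, -\alpha)$-closed ditrail $(r, e_1, x_1) + Q + (x_2, e_2, r)$ over $r$, contradicting the almost strong hypothesis. To find $Q$, take a $-\beta_1$-ditrail $P$ from $x_1$ to $r$ (guaranteed by the almost strong condition), and by Lemma~\ref{lem:rootsimple} choose it so that $r$ appears only at its end; then its last edge $e'$ has sign $-\alpha$ at $r$. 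If $e' \neq e_1$, stripping $e'$ from $P$ yields a ditrail $R$ in $G - r$ from $x_1$ to the other end $y$ of $e'$, and $(r, e_1, x_1) + R + (y, e', r)$ is already the desired contradiction. The remaining case $e' = e_1$ gives a $(-\beta_1, -\beta_1)$-closed ditrail $Q'$ over $x_1$ in $G - r$; if $Q'$ visits $x_2$, splitting it at the first visit (and reversing one half if necessary) produces $Q$, whereas if $Q'$ does not visit $x_2$, the symmetric analysis at $x_2$ yields a $(-\beta_2, -\beta_2)$-closed ditrail $Q''$ over $x_2$ in $G - r$ not visiting $x_1$, and the contradiction must then be extracted by exploiting the connectedness of $G - r$ to bridge $Q'$ and $Q''$ into a suitable diwalk.

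Once uniqueness is established, verifying that $G - r$ is a strong $\beta_C$-radial with root $x_C$ is straightforward. For any $v \in V(G - r)$, the almost strong condition provides both an $(\alpha, -\alpha)$-ditrail and a $(-\alpha, -\alpha)$-ditrail from $v$ to $r$; by Lemma~\ref{lem:rootsimple} each can be chosen to visit $r$ only at its end, and by uniqueness each must end with $e_C$. Stripping off $e_C$ yields ditrails of signatures $(\beta_C, -\beta_C)$ and $(-\beta_C, -\beta_C)$ from $v$ to $x_C$ in $G - r$, supplying both the $\beta_C$-radial and strong properties; the case $v = x_C$ is handled identically, using the trivial ditrail for the former property and a nontrivial stripped ditrail for the latter. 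The main obstacle is the last sub-case of the uniqueness argument: the hypothesis provides closed ditrails over $x_1$ and over $x_2$ but no direct ditrail between them, and turning the undirected connectedness of $G - r$ into the required signed ditrail demands a delicate structural analysis.
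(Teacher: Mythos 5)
Your reduction to a single block, the existence step, and the final step (stripping $e_C$ from root-simple ditrails to obtain the strong-radial property of $C-r$) are all sound and match the intended use of Lemmas~\ref{lem:asr2blockdecomp}, \ref{lem:asr2baloon}, and \ref{lem:rootsimple}. The gap is exactly where you flag it: the uniqueness sub-case in which the $(-\beta_1,-\beta_1)$-closed ditrail $Q'$ over $x_1$ avoids $x_2$ and the $(-\beta_2,-\beta_2)$-closed ditrail $Q''$ over $x_2$ avoids $x_1$. Undirected connectedness of $G-r$ cannot be upgraded to the signed ditrail you need: an undirected path joining $Q'$ and $Q''$ need not be traversable as a diwalk at all, and even if it were, nothing controls the signs at its two ends, so it cannot in general be spliced with $Q'$ and $Q''$ into a $(-\alpha,-\alpha)$-closed ditrail over $r$. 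This sub-case is the real content of the lemma, not a routine patch.

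The paper closes it with a different mechanism. For each edge $i\in\parcut{G}{r}$ with sign $-\alpha$ at $r$ it takes the \emph{maximal} subgraph $K_i$ of $G$ that is a strong $\beta_i$-radial with root $x_i$ (nonempty by Lemma~\ref{lem:asr2baloon} together with Theorem~\ref{thm:str}), and proves three claims: the $K_i$ are pairwise disjoint, no ditrail bridges two of them, and they cover $V(G)\setminus\{r\}$. The directed connections needed for these claims come not from connectedness but from the radial and almost-strong properties themselves: every vertex admits an $\alpha$-ditrail and a $(-\alpha,-\alpha)$-ditrail to $r$, and tracing such a ditrail to its first entry into $K_e\cup K_f\cup\{r\}$ either produces a bridge between two $K$'s (whence a forbidden $(-\alpha,-\alpha)$-closed ditrail over $r$, using the strong-radial property inside each $K$ to adjust the end signs) or produces a diear relative to a single $K_e$, contradicting maximality via Theorem~\ref{thm:str}. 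The upshot is that the $K_i$ are precisely the connected components of $G-r$, which yields uniqueness and the strong-radial claim simultaneously. If you want to keep your two-edge contradiction framework, you would still need to grow $Q'$ and $Q''$ to maximal strong radials and run essentially this argument; the closed ditrails alone are too weak to finish.
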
 

\begin{proof}
Let $F$ be the set of edges from $\parcut{G}{r}$ in which  the sign of $r$ is $-\alpha$. 
For each $i\in F$, let $x_i$ be the end of $i$ other than $r$, let $\beta_i$ be the sign of $x_i$ over $i$, 
and $K_i$ be the maximal subgraph of $G$ that is a strong $\beta_i$-radial with root $x_i$.  
Lemma~\ref{lem:asr2baloon} implies that there is a $(-\beta_i, -\beta_i)$-closed ditrail over $x_i$; 
 therefore, Theorem~\ref{thm:str}  ensures that $K_i$ is nonempty. 

\begin{pclaim} \label{claim:soleneck:nonpara} 
If $e$ and $f$ are distinct edges from $F$, then $x_e$ and $x_f$ are distinct. 
\end{pclaim} 
\begin{proof} 
Suppose that $x_e = x_f$, that is, $e$ and $f$ are parallel edges. 
If $\beta_e \neq \beta_f$, 
then $(r, e, x_e) + (x_f, f, r)$ is a $(-\alpha, -\alpha)$-closed ditrail over $r$; 
this is a contradiction. 
Hence, $\beta_e = \beta_f$. 
Lemma~\ref{lem:asr2baloon} implies that 
there is a $(-\beta_e, -\beta_e)$-closed ditrail $C$ over $x_e$ 
and that $C$ does not contain $r$. Note that this implies $C$ does not contain $f$. 
Hence, 
$(r, e, x_e) + C + (x_f, f, r)$ is a $(-\alpha, -\alpha)$-closed ditrail over $r$, 
which is again a contradiction. 
Thus, the claim is proved. 
\end{proof} 

Hence, in the following, we assume Claim~\ref{claim:soleneck:nonpara}. 

\begin{pclaim} \label{claim:soleneck:disjoint} 
If $e$ and $f$ are distinct edges from $F$, then $K_e$ and $K_f$ are disjoint. 
\end{pclaim} 
\begin{proof} 
Suppose that the claim fails, and let $u \in V(K_e)\cap V(K_f)$. 
As $K_f$ is a strong $\beta_f$-radial, there is an $(\alpha, -\beta_f)$-ditrail $P$ of $K_f$ from $u$ to $x_f$. 
Trace $\reverse{P}$ from $x_f$, and 
let $v$ be the first encountered vertex in $V(K_e)$. 
Let $\gamma \in \{+, -\}$ be the sign of $v$ over $x_f\reverse{P}v$.  
Because $K_e$ is a strong $\beta_e$-radial, there is a $(-\gamma, -\beta_e)$-ditrail $Q$ of $K_e$ from $v$ to $x_e$. 
Thus, $(r, e, x_e) + \reverse{Q} + vPx_f + (x_f, f, r)$ is a $(-\alpha, -\alpha)$-closed ditrail over $r$, 
which contradicts the assumption that $G$ is an almost strong $\alpha$-radial. 
\end{proof}  

Hence, in the following, we assume Claim~\ref{claim:soleneck:disjoint}. 

\begin{pclaim} \label{claim:soleneck:nobridge} 
If $e$ and $f$ are distinct edges from $F$, 
 $G$ has no ditrail  whose ends are individually in $K_e$ and $K_f$ 
 and whose edges are disjoint from $E(K_e)\cup E(K_f) \cup \{e\} \cup \{f\}$. 
\end{pclaim} 
\begin{proof} 
Suppose, to the contrary, that $G$ has such ditrail $R$ from $v_e \in V(K_e)$ to $v_f \in V(K_f)$.  
Let $\gamma_e$ and $\gamma_f$  be the signs of $v_e$ and $v_f$ over $R$, respectively. 
Because $K_i$ is a strong $\beta_i$-radial for each each $i \in \{e, f\}$, 
there is a $(-\gamma_i, -\beta_i)$-ditrail $P_i$ of $K_i$ from $v_i$ to $x_i$. 
Then, $(r, e, x_e) + \reverse{P_e} +  R + P_f + (x_f, f, r)$ is a $(-\alpha, -\alpha)$-closed ditrail over $r$, 
which contradicts the assumption that $G$ is an almost strong $\alpha$-radial. 
\end{proof}  

Note that Claim~\ref{claim:soleneck:nobridge} particularly implies that 
there is no edge between $K_e$ and $K_f$. 

Let $\bigcup_{i\in F} V(K_i) =: U$. 
It remains to prove  $U = V(G)\setminus \{r\}$. 


\begin{pclaim} \label{claim:soleneck:all}
For each $x\in V(G)\setminus \{r\}$, there exists $e\in F$ with $x\in V(K_e)$. 
\end{pclaim} 
\begin{proof} 
Suppose, to the contrary, that the set $V(G)\setminus \{r\} \setminus U$ is not empty. 
As $G$ is a radial, there is a ditrail from any vertex in this set to the vertex $r$. 
Therefore, 
there exists $v\in U$ and $u\in V(G)\setminus \{r\} \setminus U$ that are adjacent by an edge $uv$. 
Let $e\in F$ be the edge such that  $v\in V(K_e)$ holds. 
Let $\beta \in \{+, -\}$ be the sign of $u$ over the edge $uv$. 
There is a $(-\beta, -\alpha)$-ditrail $P$ from $u$ to $r$.  
Trace $P$ from $u$, and let $x$ be the first encountered vertex in $U \cup \{r\}$; 
from the definition of $U$, $x\neq r$ holds, 
and therefore, $uPx$ does not contain any edges from $F$. 
Let $f \in F$ be the edge such that $x \in V(K_f)$ holds.  
Then, $(v, uv, u) + uPx$ is a ditrail that joins $K_e$ and $K_f$ 
 and is disjoint from $E(K_e)\cup E(K_f)\cup \{e\} \cup \{f\}$. 
Claim~\ref{claim:soleneck:nobridge} implies  $e = f$. 
Hence, $(v, uv, u) + uPx$ is a diear relative to $K_e$. 
Theorem~\ref{thm:str} then implies that $K_e + ( (v, uv, u) + uPx)$ is also a strong $\beta_e$-radial with root $x_e$, 
which contradicts the maximality of $K_e$. 
The claim is now proved. 

\end{proof} 

Combining Claims~\ref{claim:soleneck:disjoint}, \ref{claim:soleneck:nobridge}, and \ref{claim:soleneck:all},  
it is now derived that $\{K_i: i\in F\}$ coincides with the set of connected components of $G-r$. 
Thus, the remaining statement of this lemma is also proved. 
\end{proof} 


From Lemmas~\ref{lem:asr2edgedel}, \ref{lem:asr2blockdecomp}, and \ref{lem:soleneck}, 
we can now prove Lemma~\ref{lem:critical2const}.

\begin{proof}[Proof of Lemma~\ref{lem:critical2const}] 
First, consider the case where $G$ has only one block over $r$ and
  the sign of $r$ is $-\alpha$ over every edge in $\parcut{G}{r}$. 
According to Lemma~\ref{lem:soleneck}, 
$G$ is then a bidirected graph obtained by the construction~\ref{item:const:base}. 

For proving other cases, we proceed by the induction on $|V(G)| + |E(G)|$ where the above case serves as the base case. 
Under the induction hypothesis, Lemmas~\ref{lem:asr2edgedel} and \ref{lem:asr2blockdecomp} prove the remaining cases, 
namely, where $\parcut{G}{r}$ contains edges in which the sign of $r$ is $\alpha$, 
and where $G$ has multiple blocks over $r$. 
This prove the lemma. 

\end{proof} 


\subsection{Construction of Almost Strong Radials} \label{sec:astr:const} 

In this section, we prove Lemma~\ref{lem:const2critical}, that is, the remaining half of Theorem~\ref{thm:asr2char}.  
We provide and prove Lemmas~\ref{lem:asr2edgeadd} and \ref{lem:asr2blockadd} in the following. 
We then use these lemmas to prove Lemma~\ref{lem:const2critical}.

\begin{lemma} \label{lem:asr2edgeadd}
Let $\alpha \in \{+, -\}$, and let $G$ be an almost strong $\alpha$-radial with root $r$. 
Let $x\in V(G)$. 
Then, the  bidirected graph obtained by adding to $G$ an edge $e$ that joins $x$ and $r$  
 is an almost strong $\alpha$-radial with root $r$ if the sign of $e$ over $r$ is $\alpha$. 
\end{lemma}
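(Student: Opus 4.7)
The plan is to verify directly that $G + e$ still satisfies the three defining properties of an almost strong $\alpha$-radial with root $r$: every vertex has an $(\alpha, -\alpha)$-ditrail to $r$, every vertex other than $r$ has a $(-\alpha, -\alpha)$-ditrail to $r$, and no $(-\alpha,-\alpha)$-closed ditrail over $r$ exists. The first two conditions transfer immediately: since $V(G+e) = V(G)$ and every ditrail in $G$ remains a ditrail in $G+e$, the witnesses inherited from $G$ still serve.

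The crux is to prevent a new $(-\alpha,-\alpha)$-closed ditrail over $r$ from appearing. The plan is to argue by contradiction: suppose $C$ is such a ditrail in $G+e$. If $C$ does not use $e$, then $C$ is already a $(-\alpha,-\alpha)$-closed ditrail over $r$ in $G$, contradicting almost-strongness of $G$. So $C$ uses $e$. Let $e$ be the $i$-th term of $C$ (an edge term), so one of $w_{i-1}, w_{i+1}$ equals $r$; by symmetry (reversing $C$ if necessary) we may assume $w_{i-1} = r$.

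If $i-1 = 1$, then $e$ is the first edge of $C$, and $\se{w_1}{C} = \se{r}{e} = \alpha$, contradicting that $C$ is a $(-\alpha,-\alpha)$-ditrail. Otherwise $w_{i-1} = r$ is an internal vertex term of $C$. Consider the prefix $C' := w_1 C w_{i-1}$. It is a closed ditrail over $r$ whose edges lie entirely in $G$ (since $e$ is used only at position $i$). Its first edge inherits sign $-\alpha$ at $r$ from $C$. At the internal vertex $w_{i-1} = r$ of $C$, the diwalk condition forces $\tse{w_{i-1}}{w_{i-2}} \neq \tse{w_{i-1}}{w_i}$; the latter equals $\se{r}{e} = \alpha$, so the former equals $-\alpha$. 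Thus the last edge of $C'$ also has sign $-\alpha$ at $r$, which makes $C'$ a $(-\alpha,-\alpha)$-closed ditrail over $r$ in $G$, contradicting the assumption on $G$.

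The only subtlety is the sign bookkeeping at the occurrence of $r$ adjacent to $e$, where the diwalk alternation condition at an internal vertex term is the essential ingredient. The symmetric case $w_{i+1} = r$ is handled identically by cutting off the suffix $w_{i+1} C w_k$ and using that its first edge has sign $-\alpha$ at $r$ by the same alternation argument, while its last edge has sign $-\alpha$ at $r$ directly from the assumption on $C$.
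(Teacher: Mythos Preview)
Your proof is correct and follows essentially the same approach as the paper: both argue that a hypothetical $(-\alpha,-\alpha)$-closed ditrail over $r$ in $G+e$ must use $e$, and then extract from it a shorter $(-\alpha,-\alpha)$-closed ditrail over $r$ avoiding $e$, hence lying in $G$, by cutting at the occurrence of $r$ adjacent to $e$ and using the sign-alternation condition there. The only cosmetic difference is that the paper normalizes to the orientation $(x,e,r)$ and takes the suffix, whereas you normalize to $w_{i-1}=r$ and take the prefix; these are mirror images of the same argument.
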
 

\begin{proof} 
Let $\hat{G} := G + e$. 
It is obvious that the lemma is proved if there is no $(-\alpha, -\alpha)$-closed ditrail over $r$. 
Suppose, to the contrary, that $\hat{G}$ has a $(-\alpha, -\alpha)$-closed ditrail $P$ over $r$. 
Then, $P$ contains the edge $e$. 
Assume, without loss of generality, that $P$ contains the sequence $(x, e, r)$. 
Then, the subtrail of $P$ that follows $(x, e, r)$ is an $(-\alpha, -\alpha)$-closed ditrail over $r$ without the edge $e$. This contradicts the assumption on $G$. 
The lemma is proved. 
\end{proof} 


\begin{lemma} \label{lem:asr2blockadd} 
Let $\alpha \in \{+, -\}$, and let $G_1$ and $G_2$ be almost strong $\alpha$-radial with root $r$ 
such that $V(G_1)\cap V(G_2) = \{ r\}$. 
Then, $G_1 + G_2$ is an almost strong $\alpha$-radial with root $r$. 
\end{lemma}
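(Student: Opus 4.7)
The plan is to verify directly the two defining conditions for $\hat{G} := G_1 + G_2$ to be an almost strong $\alpha$-radial with root $r$. First I would note that the hypothesis $V(G_1)\cap V(G_2) = \{r\}$ forces $E(\hat{G})$ to be the disjoint union of $E(G_1)$ and $E(G_2)$, and in particular ensures that no edge of $\hat{G}$ has one end in $V(G_1)\setminus\{r\}$ and the other in $V(G_2)\setminus\{r\}$; this small structural observation will be reused throughout.

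For the ditrail-existence conditions, any $v\in V(\hat{G})\setminus\{r\}$ lies in a unique $V(G_i)\setminus\{r\}$, and the $(\alpha,-\alpha)$- and $(-\alpha,-\alpha)$-ditrails from $v$ to $r$ supplied by $G_i$ immediately serve as ditrails of $\hat{G}$. This handles the easy half.

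The core of the argument is showing that $\hat{G}$ contains no $(-\alpha,-\alpha)$-closed ditrail over $r$. I would argue by contradiction: suppose such a ditrail $C$ exists. List the vertex terms of $C$ denoting $r$ in their order along $C$ as $t_{i_0}, t_{i_1},\ldots, t_{i_m}$, and for each $j\in\{1,\ldots,m\}$ let $C_j$ be the sub-ditrail of $C$ from $t_{i_{j-1}}$ to $t_{i_j}$, which is a closed ditrail over $r$ visiting $r$ only at its two endpoints. The opening observation forces the internal vertices of $C_j$ to lie in a single $V(G_i)\setminus\{r\}$, so $C_j$ is entirely contained in one $G_i$. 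Writing $a_j$ and $b_j$ for the signs of $r$ over the first and last edges of $C_j$, the $(-\alpha,-\alpha)$-signature of $C$ gives $a_1 = b_m = -\alpha$, while the diwalk condition at each intermediate term denoting $r$ gives $a_{j+1} = -b_j$ for $1\le j\le m-1$. A short induction then shows that if no $C_j$ were a $(-\alpha,-\alpha)$-closed ditrail, one would have $b_j = \alpha$ for all $j$, contradicting $b_m = -\alpha$. Hence some $C_{j^\ast}$ is a $(-\alpha,-\alpha)$-closed ditrail over $r$ sitting inside some $G_i$, contradicting that $G_i$ is almost strong.

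The main obstacle is the sign bookkeeping across the repeated visits of $C$ to $r$: one must confirm that the decomposition $C = C_1 + \cdots + C_m$ really produces closed ditrails each contained in a single $G_i$ (relying on the vertex-intersection hypothesis to rule out edges crossing between the two sides), and that the diwalk condition at the intermediate occurrences of $r$ yields precisely the recursion $a_{j+1} = -b_j$ that drives the parity step. Once these two points are justified, the parity argument itself is elementary.
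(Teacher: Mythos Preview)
Your proof is correct and follows essentially the same approach as the paper: decompose a hypothetical $(-\alpha,-\alpha)$-closed ditrail over $r$ into closed sub-ditrails between consecutive visits to $r$, observe that each lives in a single $G_i$, and use a sign/parity argument to find one that is $(-\alpha,-\alpha)$. The paper's proof is terser, summarizing your sign bookkeeping as ``a simple counting argument,'' but the content is the same.
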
 

\begin{proof} 
It suffices to prove that $G_1 + G_2$ does not have any $(-\alpha, -\alpha)$-closed ditrails over $r$. 
Suppose, to the contrary, that $G_1 + G_2$ has a $(-\alpha, -\alpha)$-closed ditrail $P$ over $r$. 
Then, $P$ is partitioned into closed ditrails over $r$ each  possessed by $G_1$ or $G_2$. 
A simple counting argument derives that one of them is $(-\alpha, -\alpha)$, which contradicts the assumption on $G_1$ or $G_2$. 
This proves the lemma. 
\end{proof} 


We can now prove Lemma~\ref{lem:const2critical} from Lemmas~\ref{lem:asr2edgeadd} and \ref{lem:asr2blockadd}. 

\begin{proof}[Proof of Lemma~\ref{lem:const2critical}] 
It is easily confirmed that a graph obtained by the construction \ref{item:const:base} is an almost strong $\alpha$-radial. 
We complete the proof of this lemma by the induction on the number of vertices and edges 
where the graphs obtained by the construction \ref{item:const:base} serve as the base case. 
Under the induction hypothesis, Lemmas~\ref{lem:asr2edgeadd} and \ref{lem:asr2blockadd} imply that  
the graphs obtained by the constructions \ref{item:const:edge} and \ref{item:const:vertex} are also 
almost strong $\alpha$-radials. 


\end{proof} 


This completes the proof of Theorem~\ref{thm:asr2char}.

\section{Linear Semiradials and Sublinear Radials} \label{sec:linear} 

\subsection{Definitions and Preliminaries} \label{sec:linear:pre} 

In this section, we introduce the classes of linear and sublinear semiradials. 
More precisely, 
we are interested in linear semiradials and sublinear radials, 
which are opposite classes of absolute semiradials and strong radials, respectively. 
We provide  characterizations of these two classes  
in Sections~\ref{sec:linear:lsr} and \ref{sec:linear:slr}. 
Here, in Section~\ref{sec:linear:pre}, 
we provide lemmas that are commonly used in Sections~\ref{sec:linear:lsr} and \ref{sec:linear:slr}.

\begin{definition} 
Let $\alpha\in\{+, -\}$. 
Let $G$ be an $\alpha$-semiradial with root $r$.  
We say that $G$ is {\em linear} if $G$ has no loop edge over $r$, and $G$ has no $-\alpha$-ditrails from $x$ to $r$
 for any $x\in V(G)$ 
except for the trivial $(-\alpha, \alpha)$-ditrail from $r$ to $r$ with no edge.  
We say that $G$ is {\em sublinear} if if $G$ has  no $(-\alpha, -\alpha)$-ditrails from $x$ to $r$ for any $x\in V(G)$. 
\end{definition} 

Note that if a semiradial $G$ is linear, then $G$ is sublinear; however, the converse does not hold. 
By definition, a linear $\alpha$-semiradial does not have any loops over $r$. 
A sublinear $\alpha$-semiradial cannot have a $(-\alpha, -\alpha)$-loop  over $r$. 
but may possess other kinds of loops  over $r$.

The next lemma is provided to be typically used for 
stating the structure of ditrails in linear semiradials and sublinear radials. 

\begin{lemma} \label{lem:ssr2ditrail} 
Let $\alpha\in\{+, -\}$. 
Let $G$ be a bidirected graph with a vertex $r\in V(G)$ such that 
no $(-\alpha, -\alpha)$-ditrail from $x$ to $r$ exists for any $x\in V(G)$. 
Let $x\in V(G)$, and let $P$ be an $(\alpha, -\alpha)$-ditrail from $x$ to $r$.  
Then, for any vertex term $w$ of $P$ except for the last one, 
$xPw$ is an $(\alpha, -\alpha)$-ditrail from $x$ to $w$,  
and $wPr$ is an $(\alpha, -\alpha)$-ditrail from $w$ to $r$. 
Accordingly, every edge in $P$ is an $(\alpha, -\alpha)$-edge. 
\end{lemma}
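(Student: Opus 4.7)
The plan is to iterate the hypothesis along $P$ together with the sign-alternation condition that holds at each intermediate vertex of a ditrail. Write $P = (w_1, \ldots, w_k)$ with $w_1 = x$ and $w_k = r$, and let $w = w_j$ be a vertex term with $j < k$. The case $j = 1$ is immediate from the conventions: $xPw$ is then the trivial ditrail, which by the definition given in Section~\ref{sec:notation} counts as both an $(\alpha, -\alpha)$- and $(-\alpha, \alpha)$-ditrail, and $wPr = P$ is an $(\alpha, -\alpha)$-ditrail by hypothesis.

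For $1 < j < k$, I would introduce $\beta := \tse{w_j}{w_{j-1}}$, the sign of $w_j$ over its preceding edge in $P$. Condition~(iii) of the definition of a diwalk, applied at the intermediate term $w_j$, forces $\tse{w_j}{w_{j+1}} = -\beta$. Therefore $xPw_j$ is an $(\alpha, \beta)$-ditrail and $w_jPr$ is a $(-\beta, -\alpha)$-ditrail. The key step is then a contradiction argument: if $\beta = \alpha$, then $w_j P r$ would be a $(-\alpha, -\alpha)$-ditrail from $w_j$ to $r$, violating the hypothesis on $G$. Hence $\beta = -\alpha$, yielding both desired statements.

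For the final clause, I would fix an arbitrary edge $e = w_i$ of $P$ (so $i$ is even) with ends $w_{i-1}$ and $w_{i+1}$. Using the first half of the lemma applied to $w_{i-1}$ (or the direct identity $\sigma(x; P) = \alpha$ when $i - 1 = 1$), the diwalk sign at $w_{i-1}$ over $e$ equals $\alpha$; analogously, the diwalk sign at $w_{i+1}$ over $e$ equals $-\alpha$. Since $\tilde\sigma$ coincides with $\sigma$ whenever $e$ is not a $(+, -)$-loop, and $(+, -)$-loops are $(+, -)$-edges by definition, this shows $e$ is an $(\alpha, -\alpha)$-edge.

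There is essentially no real obstacle here; the lemma is a direct exploitation of the hypothesis at each intermediate vertex. The only care required is bookkeeping: remembering the trivial-ditrail convention for the boundary case $w = x$, and distinguishing $\tilde\sigma$ from $\sigma$ when a $(+, -)$-loop is present so that the transition from ``diwalk signs at the two ends of $e$'' to ``$e$ is an $(\alpha, -\alpha)$-edge'' is justified.
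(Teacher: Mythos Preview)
Your proof is correct and follows the same idea as the paper's: suppose $xPw$ ended with sign $\alpha$, then $wPr$ would be a $(-\alpha,-\alpha)$-ditrail to $r$, contradicting the hypothesis. The paper compresses everything into that single observation and the phrase ``accordingly, the claims follow,'' whereas you have spelled out the boundary case $w=x$, the alternation at intermediate terms, and the $(+,-)$-loop subtlety for the edge conclusion; this extra care is fine but not a different approach.
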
 

\begin{proof} 
If $xPw$ is an $(\alpha, \alpha)$-ditrail, 
then $wPr$ is an $(-\alpha, -\alpha)$-ditrail from $w$ to $r$, 
which is a contradiction.  
Accordingly, the claims follow. 
\end{proof} 


Furthermore, the next lemma regarding ditrails 
is provided to be typically used for linear semiradials. 

\begin{lemma} \label{lem:lsr2ditrail} 
Let $\alpha\in\{+, -\}$. 
Let $G$ be a bidirected graph with a vertex $r\in V(G)$ such that 
$G$ has no loop over $r$, and  
no $-\alpha$-ditrail from $x$ to $r$ exists for any $x\in V(G)$ except for the trivial ditrail $(r)$.    
Let $x\in V(G)$, and let $P$ be an $(\alpha, \alpha)$-ditrail from $x$ to $r$.  
Then, for any vertex term $w$ of $P$ except for the last one, 
$xPw$ is an $(\alpha, -\alpha)$-ditrail from $x$ to $w$,  
and $wPr$ is an $(\alpha, \alpha)$-ditrail from $w$ to $r$. 
Accordingly, 
every edge in $P$ except the last one is  an $(\alpha, -\alpha)$-edge, 
whereas the last edge is an $(\alpha, \alpha)$-edge. 
\end{lemma}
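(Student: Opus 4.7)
The plan is to analyze the sign of an arbitrary intermediate vertex term of $P$ using the sign-alternation rule for a diwalk at interior vertices, and to use the hypothesis on $-\alpha$-ditrails to rule out one of the two possible signs; the edge classification will then follow by reading off signs at the vertex terms adjacent to each edge. To begin, I would dispose of the boundary case $w = x$ (the first vertex term): here $xPw$ is the trivial ditrail, which is simultaneously a $(+, -)$- and a $(-, +)$-ditrail by convention, and $wPr = P$ is $(\alpha, \alpha)$ by hypothesis, so both claims hold trivially.

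For $w$ an intermediate vertex term (not the first and not the last), I would let $\beta$ denote the sign of $w$ over $xPw$, so that condition (iii) in the definition of a diwalk forces the sign of $w$ over $wPr$ to be $-\beta$. The key step is to exclude the case $\beta = \alpha$: if it held, then $wPr$ would be a $-\alpha$-ditrail from $w$ to $r$ containing at least one edge (because $w$ is not the last vertex term of $P$), hence distinct from the trivial exception $(r)$, contradicting the hypothesis. Therefore $\beta = -\alpha$, which yields that $xPw$ is an $(\alpha, -\alpha)$-ditrail and $wPr$ is an $(\alpha, \alpha)$-ditrail.

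The edge classification follows by applying the above to both vertex terms adjacent to each edge term. For an edge $e$ of $P$ with preceding vertex term $u$ and succeeding vertex term $v$: if $e$ is not the last edge of $P$, then $v$ is not the last vertex term, so $xPv$ being $(\alpha, -\alpha)$ forces the sign of $v$ over $e$ to be $-\alpha$; and $uPr$ being $(\alpha, \alpha)$ (whether $u = x$ or $u$ is interior) forces the sign of $u$ over $e$ to be $\alpha$, so $e$ is an $(\alpha, -\alpha)$-edge. For the last edge, the sign of $r$ over $P$ is $\alpha$ by hypothesis, and $u$'s sign over $e$ is again $\alpha$ by the same reasoning, so this edge is an $(\alpha, \alpha)$-edge. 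The principal delicate point is the trivial-ditrail exception: one must verify that $wPr$ is genuinely nontrivial whenever $w$ is not the last term, which uses precisely the fact that $w$ not being the last term means $wPr$ contains at least one edge and therefore cannot coincide with the trivial ditrail $(r)$.
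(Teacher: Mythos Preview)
Your proof is correct and follows the same approach as the paper's: both argue that if $xPw$ failed to be $(\alpha,-\alpha)$ then $wPr$ would be a nontrivial $-\alpha$-ditrail to $r$, contradicting the hypothesis. The paper's proof is a one-line version of exactly this contradiction, while you have spelled out the boundary case $w=x$, the alternation argument, and the edge-classification deduction in full detail.
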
 

\begin{proof} 
The first statement obviously holds; 
for, otherwise, $wPr$ would be an $(-\alpha, \alpha)$-ditrail from $w$ to $r$.  
Accordingly, the claims follow.   
\end{proof} 


The next lemma is derived from Lemmas~\ref{lem:ssr2ditrail} and \ref{lem:lsr2ditrail}.

\begin{lemma}\label{lem:noposi} 
Let $\alpha\in\{+, -\}$. 
Let $G$ be a linear $\alpha$-semiradial or sublinear $\alpha$-radial with root $r\in V(G)$. 
Then, $G$ does not have any $(-\alpha, -\alpha)$-edges. 
\end{lemma}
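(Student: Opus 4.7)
The plan is to proceed by contradiction: assume that $G$ contains a $(-\alpha,-\alpha)$-edge $e$ with (possibly identical) ends $u$ and $v$, and derive a forbidden ditrail to $r$. The basic idea is to use the $\alpha$-semiradial/$\alpha$-radial property to obtain an $\alpha$-ditrail $P$ from a chosen end of $e$ to $r$, prepend $e$ to $P$, and verify that the resulting concatenation is a valid $(-\alpha,\ldots)$-ditrail from the other end of $e$ to $r$. This violates sublinearity in the radial case and the linearity condition in the semiradial case.

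First I would dispose of the degenerate positions of $e$ relative to $r$. If $e$ is a loop at $r$, then $(r,e,r)$ is itself a $(-\alpha,-\alpha)$-closed ditrail over $r$; this contradicts sublinearity directly, and cannot occur in the linear case since linearity forbids loops over $r$. If instead $e$ is a non-loop with $u=r$, then $(v,e,r)$ is already a $(-\alpha,-\alpha)$-ditrail from $v$ to $r$, again contradicting the hypothesis in both cases.

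So it remains to handle the main case in which both ends of $e$ avoid $r$ (if $e$ is a loop then $u=v\neq r$; otherwise $u,v\neq r$ and $u\neq v$). By the $\alpha$-radial (respectively, $\alpha$-semiradial) property applied at $u$, choose an $(\alpha,-\alpha)$-ditrail (respectively, any $\alpha$-ditrail) $P$ from $u$ to $r$. The key technical step is to show $e\notin E(P)$: in the sublinear radial case this is immediate from Lemma~\ref{lem:ssr2ditrail}, which asserts that every edge of such a $P$ is an $(\alpha,-\alpha)$-edge; in the linear semiradial case I split according to whether $P$ is $(\alpha,-\alpha)$ or $(\alpha,\alpha)$, applying Lemma~\ref{lem:ssr2ditrail} in the former case and Lemma~\ref{lem:lsr2ditrail} in the latter, and in either case the edges of $P$ are $(\alpha,-\alpha)$- or $(\alpha,\alpha)$-edges, so the $(-\alpha,-\alpha)$-edge $e$ cannot occur. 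Then $(v,e,u)+P$ (or $(v,e,v)+P$ when $e$ is a loop) is well-defined and is checked to be a ditrail by verifying condition (iii) at the pivot vertex: the sign of $u$ on $e$ is $-\alpha$ while its sign on the first edge of $P$ is $\alpha$, which are distinct. The resulting ditrail from $v$ to $r$ begins with sign $-\alpha$ and, in the radial case, also ends with $-\alpha$, yielding the forbidden $(-\alpha,-\alpha)$-ditrail; in the linear case it is a $-\alpha$-ditrail from $v\neq r$ to $r$, which is equally forbidden.

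The main obstacle I anticipate is purely bookkeeping: formally confirming that the concatenation $(v,e,v)+P$ is a diwalk when $e$ is a (non-$(+,-)$) loop, which requires reading condition~(iii) of the ditrail definition carefully at the intermediate occurrence of $v$. Once this verification is dispatched, the contradiction is immediate and the lemma follows uniformly for both hypothesized classes.
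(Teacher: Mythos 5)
Your proposal is correct and follows essentially the same route as the paper's proof: take an $\alpha$-ditrail $P$ from one end of the alleged $(-\alpha,-\alpha)$-edge $e$ to $r$, invoke Lemmas~\ref{lem:ssr2ditrail} and \ref{lem:lsr2ditrail} to conclude that $e\notin E(P)$, and prepend $e$ to obtain a forbidden $-\alpha$-ditrail (a $(-\alpha,-\alpha)$-ditrail in the sublinear case). Your extra case analysis for loops and for $e$ incident to $r$ is sound but only adds care the paper leaves implicit, not a genuinely different argument.
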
 

\begin{proof} 
Suppose, to the contrary, that $G$ has a $(-\alpha, -\alpha)$-edge $e$; let $u,v \in V(G)$ be the ends of $e$.  
Let $P$ be an $(\alpha, \beta)$-ditrail from $v$ to $r$; 
if $G$ is a linear $\alpha$-semiradial, then let $\beta\in\{+, -\}$; 
if $G$ is a sublinear $\alpha$-radial, then let $\beta = -\alpha$. 
Lemmas~\ref{lem:ssr2ditrail} and \ref{lem:lsr2ditrail} imply 
that  $P$ does not contain the edge $e$.  
Therefore, $(u, e, v) + P$ is an $(-\alpha, \beta)$-ditrail from $u$ to $r$, which is a contradiction. 
This completes the proof.

\end{proof} 


\subsection{Characterization of Linear Semiradials} \label{sec:linear:lsr} 

In this section, we prove Theorem~\ref{thm:lsr}, 
namely, a constructive characterization of linear semiradials. 
Here, the structure of a linear semiradial is stated 
using a digraphic bidirected graph and its strong component decomposition. 
It is revealed that 
linear semiradials are digraphic bidirected graphs 
with some homogeneous signed edges that do not very much affect the structure of ditrails. 

Lemmas~\ref{lem:lqr2decomp} and \ref{lem:lsr2const} in the following are provided for proving 
the sufficiency and necessity of Theorem~\ref{thm:lsr}, respectively. 

\begin{lemma} \label{lem:lqr2decomp} 
Let $\alpha\in\{+, -\}$. 
Let $G$ be a linear $\alpha$-semiradial with root $r\in V(G)$. 
Let $F$ be the set of $(\alpha, \alpha)$-edges in $G$. 
Then, $G-F$ is a digraphic digraph. 
Additionally, in the strong component decomposition of the $\alpha$-digraphic bidirected graph $G-F$, 
\begin{rmenum} 
\item \label{item:lqr2decomp:maxcomp} $(G-F)[r]$ is a strong component that is maximal, and 
\item \label{item:lqr2decomp:neigh} 
 each maximal strong component $C$ that is distinct from $(G-F)[r]$  
  has a vertex that is adjacent to $r$ with an $(\alpha, \alpha)$-edge in $G$. 
\end{rmenum} 
\end{lemma}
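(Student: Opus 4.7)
My approach is to exploit the linearity hypothesis together with the two ditrail lemmas of Section~\ref{sec:linear:pre}. First, since Lemma~\ref{lem:noposi} rules out $(-\alpha,-\alpha)$-edges and $F$ collects all $(\alpha,\alpha)$-edges, every edge of $G-F$ is a $(+,-)$-edge, so $G-F$ is digraphic.

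For claim~\ref{item:lqr2decomp:maxcomp}, I would show that $r$ has no outgoing dipath in the $\alpha$-digraphic bidirected graph $G-F$. A dipath from $r$ to some $v$ in this sense is precisely an $(\alpha,-\alpha)$-ditrail from $r$ to $v$ in $G-F$, and its reverse is a $(-\alpha,\alpha)$-ditrail from $v$ to $r$ in $G$; by linearity this forces $v=r$ and the ditrail to be trivial. Consequently $r$ is strongly connected to no other vertex, so its strong component is exactly $(G-F)[r]$, and the same observation forbids $(G-F)[r]\parst{\alpha}D$ for any other $D$, which is the maximality assertion.

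For claim~\ref{item:lqr2decomp:neigh}, let $C$ be a maximal strong component distinct from $(G-F)[r]$ and pick $v\in C$; note $v\neq r$. Because $G$ is an $\alpha$-semiradial, there is an $\alpha$-ditrail $P$ from $v$ to $r$ in $G$, which is either $(\alpha,-\alpha)$ or $(\alpha,\alpha)$. The first case is impossible: by Lemma~\ref{lem:ssr2ditrail} such a $P$ would consist entirely of $(\alpha,-\alpha)$-edges, hence lie inside $G-F$, witnessing $C\parst{\alpha}(G-F)[r]$ and contradicting the fact that two distinct maximal elements of a poset are incomparable. So $P$ is an $(\alpha,\alpha)$-ditrail, and Lemma~\ref{lem:lsr2ditrail} tells us that every edge of $P$ except the last is an $(\alpha,-\alpha)$-edge while the last edge $e$ is an $(\alpha,\alpha)$-edge incident to $r$. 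Writing $u$ for the vertex immediately preceding $r$ along $P$, the prefix $vPu$ is an $(\alpha,-\alpha)$-ditrail in $G-F$, so the strong component $D$ of $u$ satisfies $C\parst{\alpha}D$; maximality of $C$ forces $D=C$, placing $u\in C$ and producing the required $(\alpha,\alpha)$-edge $e$ between $C$ and $r$.

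The step that most demands care is keeping the bookkeeping straight when translating between the bidirected ditrail language and the $\alpha$-digraphic dipath language, in particular tracking how reversal swaps the two end signs of a ditrail; once that correspondence is pinned down, the preliminary lemmas make the remainder of the argument essentially mechanical.
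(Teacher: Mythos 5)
Your proposal is correct and follows essentially the same route as the paper: Lemma~\ref{lem:noposi} for the digraphic claim, the linearity condition (no nontrivial $(-\alpha,\alpha)$-ditrail into $r$) for claim~\ref{item:lqr2decomp:maxcomp}, and Lemmas~\ref{lem:ssr2ditrail} and \ref{lem:lsr2ditrail} to split the $\alpha$-ditrail into an $(\alpha,-\alpha)$-prefix inside $G-F$ plus a final $(\alpha,\alpha)$-edge for claim~\ref{item:lqr2decomp:neigh}. The only difference is that you spell out the singleton-component and poset-incomparability bookkeeping slightly more explicitly than the paper does.
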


\begin{proof} 
The first statement  obviously follows from Lemma~\ref{lem:noposi}. 
Because $G$ has no $(-\alpha, \alpha)$-ditrail from any $x\in V(G)\setminus \{r\}$ to $r$, neither does $G-F$. 
This is equivalent to $(G-F)[r]$ being a maximal strong component of $G-F$; 
the statement \ref{item:lqr2decomp:maxcomp} is proved. 

For proving \ref{item:lqr2decomp:neigh}, 
let $C$ be a maximal strong component of $G-F$ that is distinct from $(G-F)[r]$,  
and let $x\in V(C)$. 
Let $P$ be an $\alpha$-ditrail of $G$ from $x$ to $r$.  
If $P$ is an $(\alpha, -\alpha)$-ditrail, 
then Lemma~\ref{lem:ssr2ditrail} implies that every edge of $P$ is $(\alpha, -\alpha)$, 
and accordingly, $P$ is also a ditrail of $G-F$; 
this contradicts the definitions of $C$ and $x$. 
Hence, $P$ is an $(\alpha, \alpha)$-ditrail.  
Let $e$ be the last edge of $P$, and let $z$ be the end of $e$ other than $r$. 
Lemma~\ref{lem:lsr2ditrail} implies that $xPz$ is an $(\alpha, -\alpha)$-ditrail from $x$ to $z$ 
and that $e$ is an $(\alpha, \alpha)$-edge. 
Because $x$ is chosen from the maximal strong component $C$,  
this implies  $z\in V(C)$, and accordingly, \ref{item:lqr2decomp:neigh} is proved.

\end{proof} 

\begin{lemma} \label{lem:lsr2const} 
Let $D$ be an $\alpha$-digraphic bidirected graph with $r\in V(D)$ 
in  which $D[r]$ is a maximal strong component.  
A bidirected graph $G$ constructed from $D$ as follows is a linear $\alpha$-semiradial with root $r$. 
\begin{rmenum} 
\item For each maximal strong component $C$ with $r\not\in V(C)$, 
arbitrarily choose a vertex $x\in V(C)$, and join $x$ and $r$ with an $(\alpha, \alpha)$-edge. 
\item 
Then, arbitrarily add $(\alpha, \alpha)$-edges except for loops over $r$; this operation can be skipped. 
\end{rmenum} 
\end{lemma}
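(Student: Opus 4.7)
The plan is to verify the three defining properties of a linear $\alpha$-semiradial: (a) every vertex $v \in V(G)$ admits an $\alpha$-ditrail from $v$ to $r$, (b) no $-\alpha$-ditrail from any vertex of $G$ to $r$ exists apart from the trivial ditrail $(r)$, and (c) $G$ has no loop over $r$. Condition (c) is immediate: step (ii) of the construction explicitly forbids $(\alpha, \alpha)$-loops at $r$, and the hypothesis that $D[r]$ is a strong component of $D$ is taken to imply that $D$ itself has no $(\alpha, -\alpha)$-loop at $r$.

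For the semiradial property (a), I would proceed by analyzing the strong component $C \in \stcomp{D}$ containing an arbitrary $v$. Since $\parstset{D}{\alpha}$ is a finite poset, there exists a maximal element $M$ with $C \st_\alpha M$. Proposition~\ref{prop:stcomp2dipath} together with the strong connectivity of $M$ yields a forward dipath in $D$ from $v$ to the designated vertex $x \in V(M)$ chosen in step (i); this dipath is an $(\alpha, -\alpha)$-ditrail in $G$. If $M = D[r]$, then $x = r$ and we are done. Otherwise I append the $(\alpha, \alpha)$-edge $xr$ supplied by step (i). The sign transition at $x$ is legal: arriving at $x$ as the head of an arc gives sign $-\alpha$ over the incoming edge, while the outgoing $(\alpha, \alpha)$-edge supplies sign $\alpha$ at $x$. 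The result is an $(\alpha, \alpha)$-ditrail from $v$ to $r$.

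For linearity (b), I would argue by contradiction. Suppose $W$ is a nontrivial $-\alpha$-ditrail from some $v$ to $r$ in $G$. The condition $\se{v}{W} = -\alpha$ forces the first edge of $W$ to be an $(\alpha, -\alpha)$-edge of $D$ with $v$ at its $-\alpha$-end, because any $(\alpha, \alpha)$-edge would only supply sign $\alpha$ at $v$. Tracing further, the diwalk alternation rule at each internal vertex propagates this observation: every subsequent edge must also be an $(\alpha, -\alpha)$-edge of $D$ traversed from its $-\alpha$-end, since an $(\alpha, \alpha)$-edge would supply sign $\alpha$ at a vertex where $-\alpha$ is required. Hence $W$ uses only edges of $D$, each traversed against its digraph orientation, so $W$ corresponds to a forward dipath in $D$ from $r$ to $v$. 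The maximality of $D[r]$ in $\parstset{D}{\alpha}$, together with the absence of loops at $r$ in $D$, precludes any such nontrivial forward dipath leaving $r$, yielding the required contradiction.

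The main obstacle I expect is the sign bookkeeping in the linearity argument: carefully verifying that no $(\alpha, \alpha)$-edge, whether incident to $r$ or not, can participate in a $-\alpha$-ditrail. Once this dichotomy between $-\alpha$-ditrails in $G$ and reverse dipaths in $D$ is established, the rest of the proof reduces to routine invocations of Proposition~\ref{prop:stcomp2dipath} and the maximality hypothesis on $D[r]$.
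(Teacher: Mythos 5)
Your proof is correct and takes essentially the same route as the paper's: the semiradial property is obtained by routing each vertex through a designated vertex of a maximal strong component and appending the $(\alpha,\alpha)$-edge, and linearity is obtained by the sign-propagation argument showing that a nontrivial $-\alpha$-ditrail to $r$ would consist entirely of $(\alpha,-\alpha)$-edges of $D$ and hence contradict the maximality of $D[r]$ in $\parstset{D}{\alpha}$. The paper dismisses the semiradial half as ``easily confirmed,'' so your version merely supplies that detail, and your explicit flagging of the assumption that $D$ has no loop over $r$ matches an assumption the paper leaves implicit.
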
 

\begin{proof} 
It is easily confirmed that $G$ is an $\alpha$-semiradial with root $r$. 
For proving that $G$ is linear, 
suppose that $G$ has a nontrivial $(-\alpha, \beta)$-ditrail $P$ from $x\in V(G)$ to $r$, where $\beta$ is either $+$ or $-$. 
Because $G$ does not contain $(-\alpha, -\alpha)$-edges, 
$\beta = \alpha$; furthermore, every edge of $P$ is a $(-\alpha, \alpha)$-edge. 
Therefore, $P$ is also a ditrail of $D$.  
This contradicts that $D[r]$ is a maximal strong component of $D$. 
Hence, $G$ is linear. 
\end{proof} 


Combining Lemmas~\ref{lem:lqr2decomp} and \ref{lem:lsr2const}, 
Theorem~\ref{thm:lsr} is now proved.

\begin{theorem} \label{thm:lsr} 
A bidirected graph is a linear $\alpha$-semiradial with root $r$ 
if and only if it is constructed as follows: 
Let $D$ be an $\alpha$-digraphic bidirected graph with $r\not\in V(D)$.  
\begin{rmenum} 
\item For each maximal strong component $C$ of $D$, 
arbitrarily choose a vertex $u\in V(C)$, 
and join $u$ and $r$ with an $(\alpha, -\alpha)$- or $(\alpha, \alpha)$-edge 
in which the sign of $u$ is $\alpha$.  
\item 

Then, arbitrarily add $(\alpha, \alpha)$-edges except for loops over $r$; this operation can be skipped. 
\end{rmenum} 
\end{theorem}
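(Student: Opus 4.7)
The plan is to obtain Theorem~\ref{thm:lsr} immediately from the two lemmas already in hand: Lemma~\ref{lem:lsr2const} will supply the sufficiency, and Lemma~\ref{lem:lqr2decomp} will supply the necessity. The only real work is a bookkeeping translation between the two presentations of the construction. Lemma~\ref{lem:lsr2const} takes as input an $\alpha$-digraphic bidirected graph $D^{*}$ that already contains $r$ as a maximal strong component, whereas Theorem~\ref{thm:lsr} takes a $D$ with $r \notin V(D)$ and introduces $r$ together with the $(\alpha,-\alpha)$- or $(\alpha,\alpha)$-edges incident to it only during step~(i).

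For the sufficiency direction, I would start from a graph $G$ produced by the theorem's procedure and define an auxiliary $\alpha$-digraphic bidirected graph $D^{*}$ by adjoining to $D$ the vertex $r$ together with exactly those step~(i) edges that are $(\alpha,-\alpha)$. The requirement that the chosen vertex $u$ carry sign $\alpha$ forces each such new edge to be an arc \emph{into} $r$ in the $\alpha$-digraphic sense, so $r$ has no outgoing arcs in $D^{*}$ and hence $D^{*}[r]$ is a maximal strong component of $D^{*}$. The remaining maximal strong components of $D^{*}$ are exactly those maximal components of $D$ whose step~(i) edge was $(\alpha,\alpha)$, and those $(\alpha,\alpha)$-edges together with the further $(\alpha,\alpha)$-edges of step~(ii) are precisely the edges Lemma~\ref{lem:lsr2const} prescribes adding to $D^{*}$. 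Lemma~\ref{lem:lsr2const} then concludes that $G$ is a linear $\alpha$-semiradial with root $r$.

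For the necessity direction, let $G$ be a linear $\alpha$-semiradial with root $r$ and let $F$ be the set of its $(\alpha,\alpha)$-edges. Lemma~\ref{lem:lqr2decomp} asserts that $G-F$ is $\alpha$-digraphic, that $(G-F)[r]$ is a maximal strong component of $G-F$, and that every other maximal strong component of $G-F$ has a vertex joined to $r$ by an $(\alpha,\alpha)$-edge of $F$. I would then set $D := (G-F) - r$, note that $D$ is $\alpha$-digraphic with $r \notin V(D)$, and observe that the maximality of $(G-F)[r]$ forces every $(\alpha,-\alpha)$-edge of $G-F$ incident to $r$ to have $r$ signed $-\alpha$ and its other end signed $\alpha$, matching the requirement of step~(i). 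A maximal strong component $C$ of $D$ is either already maximal in $G-F$ (and Lemma~\ref{lem:lqr2decomp} supplies an $(\alpha,\alpha)$-edge from $C$ to $r$) or becomes maximal only upon removing $r$ (so $C$ had an $(\alpha,-\alpha)$-edge to $r$ in $G-F$); in either case some edge of the required type can be assigned to step~(i), and the rest to step~(ii).

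The main obstacle is not any deep combinatorial argument but the careful bookkeeping: checking that the strong components of $D = (G-F)-r$ correspond bijectively to those of $D^{*}$ minus $\{r\}$, that each relevant edge of $G$ can be placed in exactly one step of the construction, and that the linearity of $G$ (in particular the absence of loops over $r$) is what prevents step~(ii) from ever being forced to introduce a forbidden loop over $r$. Once this accounting is made explicit, the theorem falls out of Lemmas~\ref{lem:lqr2decomp} and~\ref{lem:lsr2const} in a few lines.
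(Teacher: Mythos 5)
Your division of labour---Lemma~\ref{lem:lsr2const} for sufficiency, Lemma~\ref{lem:lqr2decomp} for necessity---is exactly what the paper does (its proof is literally the one sentence ``combining the two lemmas''), and your sufficiency half is sound: adjoining $r$ and the $(\alpha,-\alpha)$-edges of step~(i) to $D$ yields a digraphic graph $D^{*}$ in which $\{r\}$ is a maximal strong component and whose other maximal strong components are precisely the maximal components of $D$ that received $(\alpha,\alpha)$-edges, so Lemma~\ref{lem:lsr2const} applies.

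The necessity half, however, has a genuine gap at the words ``and the rest to step~(ii)''. Step~(ii) accepts only $(\alpha,\alpha)$-edges, and step~(i) supplies exactly one edge per maximal strong component of $D$; hence every $(\alpha,-\alpha)$-edge of $G$ incident to $r$ would have to be the unique step~(i) edge of some maximal strong component of $D=(G-F)-r$. A linear $\alpha$-semiradial need not satisfy this. Take $\alpha=+$ and $V(G)=\{a,b,r\}$ with three $(+,-)$-edges: $ab$ with $a$ signed $+$, $ar$ with $a$ signed $+$, and $br$ with $b$ signed $+$. Every vertex has a $(+,-)$-ditrail to $r$, there is no loop over $r$, and no nontrivial $-$-ditrail can end at $r$ (such a ditrail traverses arcs backwards, so its last edge would have to be an edge over which $r$ has sign $+$, and there is none); thus $G$ is a linear $+$-semiradial with root $r$. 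Here $F=\emptyset$, $D$ is the single arc from $a$ to $b$, its only maximal strong component is $\{b\}$, and the edge $ar$ can be placed in neither step. So the bookkeeping you defer cannot be completed for the statement as literally written: the mismatch is that Lemma~\ref{lem:lsr2const} starts from a digraph that already contains $r$ together with \emph{all} arcs into $r$, whereas the theorem's step~(i) reintroduces only one edge per maximal component. Your argument (and the theorem) go through once step~(i) is relaxed to permit, in addition, arbitrarily many further edges joining vertices of $D$ to $r$ over which the sign of $r$ is $-\alpha$; with that amendment Lemma~\ref{lem:lqr2decomp} does guarantee that each maximal strong component of $D$ receives an edge of one of the two permitted kinds, and the remaining edges can be distributed as you describe. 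A careful write-up should flag this correction rather than absorb it into ``the rest to step~(ii)''.
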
 


Under Theorem~\ref{thm:lsr} and the statements from Sections~\ref{sec:digraph} and \ref{sec:digraphic}, 
 the structure of linear semiradials are now understood from first principles.

\subsection{Characterization of Sublinear Radials}  \label{sec:linear:slr} 

In this section, 
we provide and prove Theorem~\ref{thm:slr2char}, namely, the constructive characterization of sublinear radials. 
In the following, we provide Lemmas~\ref{lem:nonposi2slsr}, \ref{lem:slr2decomp}, and \ref{lem:const2lsr} 
and use these to prove Theorem~\ref{thm:slr2char}. 

The next lemma proves the sufficiency of Theorem~\ref{thm:slr2char}. 

\begin{lemma} \label{lem:slr2decomp} 
Let $\alpha\in\{+, -\}$, and let $G$ be a  sublinear $\alpha$-radial with root $r\in V(G)$. 
Let $F$ be the set of $(\alpha, \alpha)$-edges in $G$. 
Then, $G-F$ is a sublinear $\alpha$-radial with root $r\in V(G)$ that is an $\alpha$-flowgraph. 
\end{lemma}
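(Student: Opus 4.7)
The plan is to show three things in sequence: that $G-F$ is digraphic, that it remains a sublinear $\alpha$-radial, and finally that it is an $\alpha$-flowgraph. The backbone of the argument is that the $(\alpha,\alpha)$-edges that are removed never actually participate in the $(\alpha,-\alpha)$-ditrails guaranteed by the $\alpha$-radial hypothesis, so deleting them costs nothing.

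First I would invoke Lemma~\ref{lem:noposi}, which tells us that $G$ has no $(-\alpha,-\alpha)$-edges. Combined with the definition of $F$, this means that every edge of $G-F$ is a $(+,-)$-edge, so $G-F$ is digraphic; viewing it as an $\alpha$-digraphic bidirected graph is then well-defined.

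Next, to verify that $G-F$ is still a sublinear $\alpha$-radial with root $r$, I would take any $v\in V(G)$ and, using the assumption that $G$ is an $\alpha$-radial, pick an $(\alpha,-\alpha)$-ditrail $P$ from $v$ to $r$ in $G$. Since $G$ is sublinear, the hypothesis of Lemma~\ref{lem:ssr2ditrail} is satisfied, so every edge of $P$ is an $(\alpha,-\alpha)$-edge. In particular $P$ uses no edge of $F$, and hence $P$ is an $(\alpha,-\alpha)$-ditrail of $G-F$ from $v$ to $r$. The sublinearity condition, namely the absence of $(-\alpha,-\alpha)$-ditrails from any vertex to $r$, is obviously inherited from $G$ by the subgraph $G-F$.

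Finally, to conclude that $G-F$ is an $\alpha$-flowgraph with root $r$, I would observe that an $(\alpha,-\alpha)$-ditrail from $v$ to $r$ in the $\alpha$-digraphic bidirected graph $G-F$ is, under the identification described in Section~\ref{sec:digraphic}, exactly a directed trail from $v$ to $r$ in the underlying digraph. The previous paragraph supplies such a trail for every $v\in V(G-F)$, so $G-F$ is a flowgraph with root $r$ in the $\alpha$-digraphic sense, as desired. I do not expect any real obstacle here: the whole argument rests on Lemma~\ref{lem:ssr2ditrail} forcing the edges of the radial's ditrails to lie outside $F$, and the rest is definition-chasing combined with Lemma~\ref{lem:noposi}.
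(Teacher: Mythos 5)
Your proposal is correct and follows the same route as the paper's (much terser) proof: Lemma~\ref{lem:ssr2ditrail} shows the $(\alpha,-\alpha)$-ditrails witnessing radiality avoid $F$, and Lemma~\ref{lem:noposi} shows $G-F$ is digraphic, hence an $\alpha$-flowgraph. You have simply spelled out the definition-chasing that the paper leaves implicit.
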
 

\begin{proof}  
Lemma~\ref{lem:ssr2ditrail} proves that $G-F$ is also a sublinear $\alpha$-radial. 
Lemma~\ref{lem:noposi} implies that  $G-F$ is a digraphic digraph. 
Thus, the claim follows. 
\end{proof} 


The next lemma is an easy observation to be used for proving Lemma~\ref{lem:const2lsr}. 

\begin{lemma} \label{lem:nonposi2slsr} 
Let $\alpha\in\{+, -\}$. 
If an $\alpha$-semiradial does not have any $(-\alpha, -\alpha)$-edges, then it is sublinear. 
\end{lemma}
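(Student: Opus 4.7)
The plan is to prove the contrapositive, in fact slightly strengthened: the absence of $(-\alpha,-\alpha)$-edges alone, independently of the semiradial hypothesis, already precludes every $(-\alpha,-\alpha)$-ditrail from a vertex to $r$, which is precisely the sublinearity condition. Suppose for contradiction such a ditrail $P=(w_1,w_2,\ldots,w_k)$ exists from some $x=w_1$ to $r=w_k$. Since the trivial ditrail $(r)$ is $(+,-)$- and $(-,+)$-, but not $(-\alpha,-\alpha)$-, we have $k\ge 3$, with $\se{w_1}{w_2}=-\alpha$ and $\se{w_k}{w_{k-1}}=-\alpha$.

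I will then proceed by induction on $k$. In the base case $k=3$, the sole edge $w_2$ satisfies $\se{w_1}{w_2}=\se{w_3}{w_2}=-\alpha$. If $w_2$ is not a $(+,-)$-loop, then the diwalk definition identifies these values with the static signs $\sigma(w_1;w_2)$ and $\sigma(w_3;w_2)$, so $w_2$ is a $(-\alpha,-\alpha)$-edge (including the loop case $w_1=w_3$), contradicting the hypothesis. If $w_2$ is a $(+,-)$-loop, then by the corresponding clause of the diwalk definition $\se{w_1}{w_2}$ and $\se{w_3}{w_2}$ must be mutually distinct, again contradicting their common value. For the inductive step $k\ge 5$, I analyze $w_2$ in the same way: either $w_2$ is a $(-\alpha,-\alpha)$-edge and we have the contradiction directly, or $\se{w_3}{w_2}=\alpha$ in every remaining case (a non-loop $(-\alpha,\alpha)$-edge, or a $(+,-)$-loop; an $(\alpha,\alpha)$-loop would force $\se{w_1}{w_2}=\alpha$, which is impossible). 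The distinct-signs condition at the interior vertex term $w_3$ then yields $\se{w_3}{w_4}=-\alpha$, so the subwalk $w_3 P w_k$ is a $(-\alpha,-\alpha)$-ditrail of length $k-2$, to which the induction hypothesis applies, yielding the contradiction.

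The main obstacle is the careful case analysis of the edge types at $w_2$ across the three clauses of the diwalk definition, and the observation that a $(-\alpha,-\alpha)$-loop counts as a $(-\alpha,-\alpha)$-edge and is thus excluded by hypothesis. Once this bookkeeping is settled, the alternating-sign induction is mechanical. It is perhaps worth noting that the argument never invokes the $\alpha$-semiradial property of $G$, reflecting the fact that sublinearity, as defined in the paper, is purely a statement about the nonexistence of ditrails to $r$.
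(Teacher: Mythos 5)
Your proof is correct and takes essentially the same route as the paper: the paper's entire proof is the one-sentence observation that any $(-\alpha,-\alpha)$-ditrail must contain a $(-\alpha,-\alpha)$-edge, which is precisely the fact you establish, just spelled out in full by induction on the length of the ditrail with the case analysis over loop types. Your closing remark that the semiradial hypothesis plays no role is accurate and consistent with how the paper uses the lemma.
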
 

\begin{proof} 
The statement obviously holds, because any $(-\alpha, -\alpha)$-ditrail has to contain an $(-\alpha, -\alpha)$-edge. 
\end{proof} 


Note that Lemmas~\ref{lem:noposi} and \ref{lem:nonposi2slsr} imply the next characterization of sublinear radials. 
The next proposition is provided for better understanding of sublinear semiradials. 

\begin{proposition} \label{prop:slsr2char} 
Let $\alpha\in\{+, -\}$. 
An $\alpha$-semiradial is sublinear if and only if there is no $(\alpha, \alpha)$-edge. 
\end{proposition}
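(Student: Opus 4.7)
The plan is to deduce this proposition directly as a corollary of Lemmas~\ref{lem:noposi} and \ref{lem:nonposi2slsr}, which is the route signaled by the paper's preceding remark. Lemma~\ref{lem:nonposi2slsr} already supplies the implication that an $\alpha$-semiradial lacking the forbidden edge type is sublinear, so that half of the equivalence reduces to a direct invocation of the lemma. No fresh combinatorics should be needed there; one simply observes that the ``forbidden edge type'' hypothesis in Lemma~\ref{lem:nonposi2slsr} can be read off from the hypothesis of the proposition.

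For the converse, I would argue by contradiction along the lines of the proof of Lemma~\ref{lem:noposi}. Assume $G$ is a sublinear $\alpha$-semiradial and suppose that it contains such an edge $e$ with ends $u$ and $v$. Using the semiradial hypothesis I would select an $\alpha$-ditrail $P$ from $v$ to $r$ terminating at $r$ with sign $-\alpha$; Lemma~\ref{lem:ssr2ditrail} then forces every edge of $P$ to be an $(\alpha,-\alpha)$-edge, and in particular $e\notin E(P)$. Prepending the single-edge diwalk $(u,e,v)$ to $P$ yields a $(-\alpha,-\alpha)$-ditrail from $u$ to $r$, contradicting sublinearity.

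The main obstacle lies in producing an $\alpha$-ditrail from $v$ with the required terminal sign $-\alpha$ at $r$. A semiradial guarantees only some $\alpha$-ditrail to $r$, which a priori could be $(\alpha,\alpha)$ rather than $(\alpha,-\alpha)$, whereas the argument in Lemma~\ref{lem:noposi} genuinely needs an $(\alpha,-\alpha)$-ditrail to invoke Lemma~\ref{lem:ssr2ditrail}. I would address this by refining the choice of ditrail from $v$, or by first showing that the sign of $e$ at $v$ forces the existence of an $(\alpha,-\alpha)$-ditrail from $v$ to $r$. Reconciling the forbidden-edge hypothesis as stated in the proposition with the edge type that actually appears in Lemmas~\ref{lem:noposi} and \ref{lem:nonposi2slsr} is precisely the subtlety that must be worked out to make the concatenation argument close.
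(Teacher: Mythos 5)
Your overall route---obtain one implication from Lemma~\ref{lem:nonposi2slsr} and the other by rerunning the argument of Lemma~\ref{lem:noposi}---is exactly what the paper intends; the paper gives no separate proof beyond the sentence asserting that those two lemmas imply the proposition. The direction you settle by citing Lemma~\ref{lem:nonposi2slsr} is correct once the edge type in the proposition is read as $(-\alpha,-\alpha)$ rather than the printed $(\alpha,\alpha)$; as printed even this direction fails (take $\alpha=+$ and two parallel edges joining $u$ and $r$, one a $(+,-)$-edge with sign $-$ at $r$ and one a $(-,-)$-edge: this is a $+$-semiradial with no $(+,+)$-edge, yet $(u,e,r)$ along the $(-,-)$-edge is a $(-,-)$-ditrail to $r$, so it is not sublinear).

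The genuine gap is the one you yourself flag in the forward direction, and it cannot be closed under the hypotheses of the proposition as stated. Your argument needs an $(\alpha,-\alpha)$-ditrail from $v$ to $r$ in order to invoke Lemma~\ref{lem:ssr2ditrail}, but the semiradial hypothesis only yields an $\alpha$-ditrail, and no refinement of the choice can upgrade it, because the implication is false for semiradials. Take $\alpha=+$ and the graph on $\{r,u,v\}$ with a $(-,-)$-edge $uv$ and $(+,+)$-edges $ur$ and $vr$. Every vertex reaches $r$ by a $+$-ditrail, and since both edges incident to $r$ carry the sign $+$ at $r$, no $(-,-)$-ditrail into $r$ exists; hence this is a sublinear $+$-semiradial, yet it contains a $(-\alpha,-\alpha)$-edge (and $(\alpha,\alpha)$-edges as well, so the printed version fails too). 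The forward direction is precisely Lemma~\ref{lem:noposi}, whose hypothesis is deliberately ``sublinear $\alpha$-\emph{radial}'': the radial condition is what supplies the $(\alpha,-\alpha)$-ditrail from $v$. The proposition should therefore read ``an $\alpha$-radial is sublinear if and only if it has no $(-\alpha,-\alpha)$-edge''; under that corrected reading your concatenation argument closes verbatim and coincides with the paper's intended proof.
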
  


The next lemma is derived from Lemma~\ref{lem:nonposi2slsr}  and proves the necessity of Theorem~\ref{thm:slr2char}. 

\begin{lemma} \label{lem:const2lsr} 
Let $\alpha\in\{+, -\}$, and let $G$ be an $\alpha$-flowgraph with root $r$. 
Then,  
 a graph obtained from $G$ by arbitrarily adding $(\alpha, \alpha)$-edges is a sublinear $\alpha$-radial with root $r$. 
\end{lemma}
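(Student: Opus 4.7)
The plan is to verify the two defining properties of a sublinear $\alpha$-radial separately for the enlarged graph $\hat{G}$, then stitch them together using Lemma~\ref{lem:nonposi2slsr}. Let $\hat{G}$ denote the bidirected graph obtained from $G$ by adding an arbitrary set of $(\alpha,\alpha)$-edges; note that no vertices are introduced, so $V(\hat{G})=V(G)$.

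First I would check that $\hat{G}$ is an $\alpha$-radial. Because $G$ is an $\alpha$-digraphic bidirected graph, the translation from the digraph language of Section~\ref{sec:digraphic} into the bidirected language says that a dipath in the underlying digraph from a vertex $x$ to $r$ is exactly an $(\alpha,-\alpha)$-ditrail from $x$ to $r$ in $G$ (the tails bear sign $\alpha$, the heads bear sign $-\alpha$). Since $G$ is a flowgraph with root $r$, such a ditrail exists for every $x\in V(G)$. The same sequence remains a valid $(\alpha,-\alpha)$-ditrail in $\hat{G}$ because adding edges cannot invalidate an existing ditrail. Thus for every $x\in V(\hat{G})$, $\hat{G}$ has an $(\alpha,-\alpha)$-ditrail from $x$ to $r$, i.e., $\hat{G}$ is an $\alpha$-radial (and in particular an $\alpha$-semiradial) with root $r$.

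Next I would observe that $\hat{G}$ contains no $(-\alpha,-\alpha)$-edge: the edges of $G$ are all $(+,-)$-edges by the digraphic assumption, and the edges added in the construction are $(\alpha,\alpha)$-edges. Neither kind is a $(-\alpha,-\alpha)$-edge. Applying Lemma~\ref{lem:nonposi2slsr} to the $\alpha$-semiradial $\hat{G}$ then yields that $\hat{G}$ is sublinear. Combining this with the previous paragraph shows that $\hat{G}$ is a sublinear $\alpha$-radial with root $r$, as required.

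There is no real obstacle here; the proof is a direct chaining of definitions with Lemma~\ref{lem:nonposi2slsr}. The only point worth stating carefully is the translation between "dipath in a digraph" and "$(\alpha,-\alpha)$-ditrail in an $\alpha$-digraphic bidirected graph", since this is what converts the hypothesis "$G$ is an $\alpha$-flowgraph" into the ditrail statement needed by the definition of $\alpha$-radial.
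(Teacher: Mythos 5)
Your proposal is correct and follows the same route as the paper: the paper also notes that the resulting graph is obviously an $\alpha$-radial and then invokes Lemma~\ref{lem:nonposi2slsr} via the absence of $(-\alpha,-\alpha)$-edges. Your write-up merely makes explicit the translation between dipaths in the $\alpha$-digraphic flowgraph and $(\alpha,-\alpha)$-ditrails, which the paper leaves as ``obvious.''
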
 

\begin{proof} 
It is obvious that the obtained graph is an $\alpha$-radial with root $r$.  
Further, Lemma~\ref{lem:nonposi2slsr} implies that it is sublinear, because it has no $(-\alpha, -\alpha)$-edges. 
\end{proof}


Lemmas~\ref{lem:slr2decomp} and \ref{lem:const2lsr} now derive Theorem~\ref{thm:slr2char}. 

\begin{theorem} \label{thm:slr2char} 
Let $\alpha\in\{+, -\}$. 
A bidirected graph $G$ is a sublinear $\alpha$-radial with root $r\in V(G)$ 
if and only if it is a graph obtained by arbitrarily adding $(\alpha, \alpha)$-edges to an $\alpha$-flowgraph with root $r$. 
\end{theorem}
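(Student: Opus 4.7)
The plan is to obtain Theorem~\ref{thm:slr2char} as an immediate corollary of the two lemmas already established in this subsection, Lemma~\ref{lem:slr2decomp} and Lemma~\ref{lem:const2lsr}, each of which supplies precisely one direction of the equivalence. No further machinery is required; the substantive content has been discharged in those lemmas and in the preparatory Lemmas~\ref{lem:ssr2ditrail}, \ref{lem:noposi}, and \ref{lem:nonposi2slsr}.

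For the necessity direction, I would take an arbitrary sublinear $\alpha$-radial $G$ with root $r$ and let $F \subseteq E(G)$ be the set of $(\alpha, \alpha)$-edges. Lemma~\ref{lem:slr2decomp} then tells us that $G - F$ is an $\alpha$-flowgraph with root $r$. Since $G = (G - F) + F$ and every edge of $F$ is by construction an $(\alpha, \alpha)$-edge, $G$ is precisely a graph obtained from the $\alpha$-flowgraph $G-F$ by adding $(\alpha, \alpha)$-edges, which is the construction described in the theorem statement.

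For the sufficiency direction, I would simply invoke Lemma~\ref{lem:const2lsr}: any graph produced from an $\alpha$-flowgraph with root $r$ by adjoining an arbitrary collection of $(\alpha, \alpha)$-edges is a sublinear $\alpha$-radial with root $r$.

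There is no genuine obstacle at this stage, because the delicate part of the argument was isolated in Lemma~\ref{lem:slr2decomp}, whose proof already relied on Lemma~\ref{lem:ssr2ditrail} to guarantee that deleting $(\alpha,\alpha)$-edges preserves the existence of $(\alpha, -\alpha)$-ditrails to $r$, and on Lemma~\ref{lem:noposi} to guarantee that the resulting graph is digraphic and hence (by Proposition~\ref{prop:flowgraph2char} applied to the $\alpha$-digraphic structure) a flowgraph. Once those two lemmas are in hand, the theorem is essentially a repackaging of them and can be written in a few lines.
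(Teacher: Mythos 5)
Your proposal is correct and matches the paper's own treatment: the theorem is stated there as an immediate consequence of Lemmas~\ref{lem:slr2decomp} and \ref{lem:const2lsr}, exactly as you argue. The only (harmless) discrepancy is that the paper labels the two directions sufficiency/necessity oppositely to your usage, which is purely a naming convention.
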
 


Under Proposition~\ref{prop:flowgraph2char} and Theorem~\ref{thm:slr2char}, 
 the structure of sublinear radials are now understood from first principles.

\begin{ac} 
This study was supported by JSPS KAKENHI Grant Number 18K13451. 
\end{ac}

\bibliographystyle{splncs03.bst}
\bibliography{sear.bib}

\end{document}